\documentclass[11pt]{article}
\usepackage[T1]{fontenc}
\usepackage{lmodern,amsmath,amsthm,amsfonts,amssymb,graphicx,float,microtype,thmtools,underscore,mathtools,anyfontsize} 
\usepackage[usenames,dvipsnames,svgnames,table]{xcolor}
\usepackage[unicode=true]{hyperref}
\usepackage{comment}
\usepackage{soul}
\usepackage{gensymb}
\hypersetup{ 
	colorlinks,
	linkcolor={black},
	citecolor={black},
	urlcolor={blue!60!black},
	pdftitle={Notes},
	pdfauthor={Vera Chekan and Torsten Ueckerdt}} 
\usepackage[noabbrev,capitalise]{cleveref}
\crefname{lem}{Lemma}{Lemmas}
\crefname{thm}{Theorem}{Theorems}
\crefname{cor}{Corollary}{Corollaries}
\crefname{prop}{Proposition}{Propositions}
\crefname{conj}{Conjecture}{Conjectures}
\crefname{rem}{Remark}{Remarks}
\crefname{openproblem}{Open Problem}{Open Problems}
\crefformat{equation}{(#2#1#3)}
\Crefformat{equation}{Equation #2(#1)#3}

\usepackage[shortlabels]{enumitem}
\setlist[itemize]{topsep=0ex,itemsep=0ex,parsep=0.4ex}
\setlist[enumerate]{topsep=0ex,itemsep=0ex,parsep=0.4ex}

\newcommand{\HH}{\mathcal{H}}
\newcommand{\EE}{\mathcal{E}}
\newcommand{\CR}{\mathcal{R}}
\newcommand{\RR}{\mathbb{R}}
\newcommand{\NN}{\mathbb{N}}

\newcommand{\pp}{\operatorname{path}}

\usepackage[longnamesfirst,numbers,sort&compress]{natbib}
\makeatletter
\def\NAT@spacechar{~}
\makeatother
\usepackage[bmargin=25mm,tmargin=25mm,lmargin=35mm,rmargin=35mm]{geometry}
\setlength{\footnotesep}{\baselinestretch\footnotesep}
\setlength{\parindent}{0cm}
\setlength{\parskip}{1.45ex}
\allowdisplaybreaks


\renewcommand{\geq}{\geqslant}
\renewcommand{\leq}{\leqslant}
\renewcommand{\thefootnote}{\fnsymbol{footnote}}
\allowdisplaybreaks
\theoremstyle{plain}
\newtheorem{thm}{Theorem}
\newtheorem{lem}[thm]{Lemma}
\newtheorem{cor}[thm]{Corollary}

\theoremstyle{definition}

\newtheorem{quest}[thm]{Question}
\newtheorem{definition}[thm]{Definition}

\theoremstyle{remark}
\newtheorem{rem}[thm]{Remark}

\begin{document}
	
\author{Vera Chekan\,\footnotemark[1] \qquad Torsten Ueckerdt\,\footnotemark[2] }
	
\footnotetext[1]{Humboldt-Universität zu Berlin, Germany
(\texttt{Vera.Chekan@informatik.hu-berlin.de})}

\footnotetext[2]{Institute of Theoretical Informatics, Karlsruhe Institute of Technology, Germany (\texttt{torsten.ueckerdt@kit.edu}).}
	
\sloppy
	
\title{\textbf{Polychromatic Colorings of Unions of Geometric Hypergraphs}}
	
\maketitle
	
\begin{abstract}
 We consider the polychromatic 
 coloring problems for unions of two or more geometric hypergraphs on the same vertex sets of points in the plane.
 We show, inter alia, that the union of bottomless rectangles and horizontal strips does in general not allow for polychromatic colorings.
 This strengthens the corresponding result of Chen, Pach, Szegedy, and Tardos [Random Struct. Algorithms, 34:11-23, 2009] for axis-aligned rectangles, and gives the first explicit (not randomized) construction of non-$2$-colorable hypergraphs defined by axis-parallel rectangles of arbitrarily large uniformity.
\end{abstract}
	
\renewcommand{\thefootnote}{\arabic{footnote}}

\section{Introduction}

A \emph{range capturing hypergraph} is a geometric hypergraph $\HH(V,{\CR})$ defined by a finite point set $V \subset \RR^2$ in the plane and a family $\CR$ of subsets of $\RR^2$, called \emph{ranges}.
Possible ranges are for example the family $\CR$ of all axis-aligned rectangles, all horizontal strips, or all translates of the first (north-east) quadrant.
Given the points $V$ and ranges $\CR$, the hypergraph $\HH(V,\CR) = (V,\EE)$ has $V$ as its vertex set and a subset $E \subset V$ forms a hyperedge $E\in \EE$ whenever there exists a range $R\in \CR$ with $E = V \cap R$.
That is, we have points in the plane, and a subset of points forms a hyperedge whenever these vertices and no other vertices are captured by a range.

For a positive integer $m$, we are then interested in the $m$-uniform subhypergraph $\HH(V,\CR,m)$ given by all hyperedges of size exactly $m$.
In particular, we investigate polychromatic vertex colorings $c\colon V \to [k]$ in $k$ colors of $\HH(V,\CR,m)$ for different families of ranges $\CR$, different values of $m$.
    A vertex coloring is \emph{proper} if every hyperedge contains at least two vertices of different colors.
    %
    A vertex coloring is \emph{polychromatic} if every hyperedge contains at least one vertex of each color.
    %
Note that a $2$-coloring is proper if and only if it is polychromatic.
This case is sometimes called property B for the hypergraph in the literature.
Further, a $2$-uniform hypergraph (hence, graph) admits a polychromatic 2-coloring if and only if it is bipartite.

In this paper, we mostly focus on polychromatic $k$-colorings for range capturing hypergraphs with given range family $\CR$.
In particular, we investigate the following question.

\begin{quest}\label{quest:polychromatic}
 Given $\CR$ and $k$, what is the smallest $m = m(k)$ such that for every finite point set $V \subset \RR^2$ the hypergraph $\HH(V,\CR,m)$ admits a polychromatic $k$-coloring?
\end{quest}

Of course, $m(k) \geq k$, while $m(k)=\infty$ is also possible.
Indeed, for all range families considered here, it holds that $m(2) \leq m(3) \leq \cdots$ and we either show that $m(k) < \infty$ for every $k \geq 1$ or already $m(2) = \infty$.
Note that in the latter case, there are range capturing hypergraphs that are not properly $2$-colorable, even for arbitrarily large uniformity $m$.

%
%

The motivation for studying polychromatic 
colorings of such geometric hypergraphs comes from questions of cover-decomposability and conflict-free colorings.
The interested reader is refered to the survey article~\cite{PPT13} and the excellent website~\cite{Zoo} which contains numerous references.

\subsection{Related Work}

There is a rich literature on range capturing hypergraphs, their polychromatic colorings, and answers to \cref{quest:polychromatic}.
Let us list the positive results (meaning $m(k)<\infty$ for all $k$) that are relevant here, whilst defining the respective families of ranges. 

\begin{itemize}
    \item For halfplanes $\CR = \{ \{ (x,y) \in \RR^2 \mid ax + by \geq 1\} \mid a,b \in \RR\}$ it is known that $m(k) = 2k-1$~\cite{SY12}.
    
    \item For south-west quadrants $\CR = \{\{(x,y) \in \RR^2 \mid x \leq a \text{ and } y \leq b\} \mid a,b \in \RR\}$ it is easy to prove that $m(k) = k$, see e.g.~\cite{KP15}.
    
    
    \item For axis-aligned strips $\CR = \{\{(x,y) \in \RR^2 \mid a_1 \leq x \leq a_2\} \mid a_1,a_2 \in \RR\} \cup \{\{(x,y) \in \RR^2 \mid a_1 \leq y \leq a_2\} \mid a_1,a_2 \in \RR\}$ it is known that $m(k) \leq 2k-1$~\cite{ACCIKLSST11}.
    
    \item For bottomless rectangles $\CR = \{\{(x,y) \in \RR^2 \mid a_1 \leq x \leq a_2 \text{ and } y \leq b\} \mid a_1,a_2,b \in \RR\}$ it is known that $1.67k \leq m(k) \leq 3k-2$~\cite{ACCCHHKLLMRU13}.
    
    \item For axis-aligned squares $\CR = \{ \{(x,y) \in \RR^2 \mid a \leq x \leq a+s \text{ and } b \leq y \leq b+s\} \mid a,b,s \in \RR\}$ it is known that $m(k) \leq O(k^{8.75})$~\cite{AKV17}.
\end{itemize}

On the contrary, let us also list the negative results (meaning $m(k) = \infty$ for some $k$) that are relevant here.
In all cases, it is shown that already $m(2)=\infty$, meaning that there is a sequence $(\HH_m)_{m \geq 1}$ of hypergraphs such that for each $m \geq 1$ we have $\HH_m = \HH(V_m,\CR,m)$ for some point set $V_m$ (in this case we say that $\HH_m$ can be \emph{realized} with $\CR$) and $\HH_m$ admits no polychromatic $2$-coloring.
One such sequence are the \emph{$m$-ary tree hypergraphs}, defined on the vertices of a complete $m$-ary tree of depth $m$, where for each non-leaf vertex, its $m$ children form a hyperedge, and for each leaf vertex, its $m$ ancestors form a hyperedge (introduced by \citet{PTT07}).
A second such sequence is due to \citet{P13} (published in~\cite{PP16}), for which we do not repeat the formal definition here and simply refer to them as the \emph{2-size hypergraphs} as their inductive construction involves hyperedges of two possibly different sizes.

\begin{itemize}
    \item For strips $\CR = \{ \{(x,y) \in \RR^2 \mid 1 \leq ax + by \leq c\} \mid a,b,c \in \RR\}$ it is known that $m(2)=\infty$ as every $m$-ary tree hypergraph can be represented with strips~\cite{PTT07}.
    
    \item For unit disks $\CR = \{ \{(x,y) \in \RR^2 \mid (x-a)^2 + (y-b)^2 \leq 1\} \mid a,b \in \RR \}$ it is known that $m(2)=\infty$ as every 2-size hypergraph can be represented with unit disks~\cite{PP16}.
\end{itemize}

Finally, for axis-aligned rectangles $\CR = \{ \{(x,y) \in \RR^2 \mid a_1 \leq x \leq a_2 \text{ and } b_1 \leq y \leq b_2\} \mid a_1,a_2,b_1,b_2 \in \RR\}$ it is also known that $m(2) = \infty$.
See \cref{thm:rectangles} below.
However, the only known proof of \cref{thm:rectangles} is a probabilistic argument and we do not have any explicit construction of a sequence $(\HH_m)_{m \geq 1}$ of $m$-uniform hypergraphs defined by axis-aligned rectangles that admit no polychromatic $2$-coloring.

\begin{thm}[Chen et al.~\cite{CPST09}]\label{thm:rectangles}{\ \\}
 For the family $\CR$ of all axis-aligned rectangles it holds that $m(2) = \infty$.
 
 That is, for every $m \geq 1$ there exists a finite point set $V \subset \RR^2$ such that for every $2$-coloring of $V$ some axis-aligned rectangle contains $m$ points of $V$, all of the same color.
\end{thm}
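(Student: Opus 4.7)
The plan is a probabilistic construction in the spirit of Erd\H{o}s's method. Fix $m \geq 2$ and take $n = n(m)$ to be a sufficiently large integer. I would let $V$ consist of $n$ points drawn independently and uniformly at random from the unit square $[0,1]^2$. Almost surely the points have pairwise distinct $x$- and $y$-coordinates, so the family $\HH(V,\CR,m)$ of axis-aligned rectangle-induced $m$-subsets is well defined. The goal is to show that with positive probability no $2$-coloring $c\colon V \to \{0,1\}$ is polychromatic on $\HH(V,\CR,m)$; any $V$ realized in that event then witnesses the theorem.

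The first ingredient will be a lower bound on $N = |\HH(V,\CR,m)|$. An $m$-subset $S \subseteq V$ is a hyperedge if and only if no point of $V \setminus S$ lies inside the bounding box of $S$. Parametrizing such an $S$ by its four extremal points together with the $m-4$ interior points, and applying linearity of expectation to the corresponding indicator sum, one expresses $\mathbb{E}[N]$ as an explicit integral over the positions of the $m$ points and the probability that the remaining $n-m$ points fall outside the bounding box. For $n$ sufficiently large in terms of $m$, this should yield $\mathbb{E}[N] \geq C\cdot n \cdot 2^m$ for a suitable constant $C$.

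The second ingredient is, for each fixed $2$-coloring $c\colon V \to \{0,1\}$, a bound on the probability that $c$ is polychromatic on every $m$-rectangle induced by $V$. Viewing $c$ as uniformly random, each specific $m$-subset is monochromatic with probability $2^{1-m}$; if the events for distinct rectangles were independent, the probability that $c$ is polychromatic everywhere would be at most $(1 - 2^{1-m})^{N}$. A union bound over the $2^n$ colorings then gives $2^n (1 - 2^{1-m})^N$, which falls below $1$ as soon as $N \geq C\cdot n \cdot 2^m$ with $C$ large enough, and the argument closes with a deterministic choice of $V$.

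The hard part will be the dependency structure of the rectangle-induced hyperedges: they share vertices and their monochromatic events are heavily correlated, so the heuristic $(1 - 2^{1-m})^N$ estimate is not directly valid. Making it rigorous will require either the Lov\'asz Local Lemma with a sharp bound on how many other $m$-rectangles meet a given one (using that axis-aligned rectangles have VC dimension $4$ to keep this dependency degree only polynomially large in $n$), or first extracting from $\HH(V,\CR,m)$ a large subfamily of pairwise vertex-disjoint $m$-rectangles, on which independence is automatic. Balancing this dependency degree against the requirement $N \geq C\cdot n \cdot 2^m$ will determine the eventual growth $n = n(m)$ and is the technical core of the argument.
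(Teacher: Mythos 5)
You should first note that the paper does not prove this statement at all: it is quoted from Chen, Pach, Szegedy and Tardos~\cite{CPST09}, and the surrounding text explicitly records that the only known proof is probabilistic and that no explicit construction is known. So the right benchmark for your proposal is the original argument of~\cite{CPST09}. Your overall frame --- a random point set, a union bound over all $2^n$ colorings, and the need to push the per-coloring failure probability below $2^{-n}$ --- is indeed the correct neighborhood, and you correctly identify the dependency structure among the rectangle hyperedges as the crux. The problem is that this crux is exactly the whole proof, and neither of the two repairs you propose can close it.

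Concretely: (i) the Lov\'asz Local Lemma bounds $\Pr\bigl[\bigcap_i \overline{A_i}\bigr]$ from \emph{below}, whereas you need an \emph{upper} bound $\Pr[\text{no monochromatic }m\text{-rectangle}] < 2^{-n}$ for each fixed coloring; the LLL points in the opposite direction, and Janson-type upper bounds do not apply either, since ``this $m$-set is a captured hyperedge and is monochromatic'' is not an increasing event in the point positions (adding a point can destroy a hyperedge). (ii) Any family of pairwise vertex-disjoint $m$-subsets of an $n$-point set has size at most $n/m$, so even granting full independence on such a subfamily the failure probability is at least about $(1-2^{1-m})^{n/m} = e^{-\Theta(n/(m2^m))}$, which is vastly larger than $2^{-n}$ for every $m \ge 2$; the union bound over $2^n$ colorings can therefore never be beaten by restricting to disjoint hyperedges. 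There is also a smaller but real inconsistency: in your union bound the coloring is fixed and the randomness lies in the point set, so the claim that ``each specific $m$-subset is monochromatic with probability $2^{1-m}$'' requires an exchangeability argument and in any case only holds for near-balanced colorings. The actual proof in~\cite{CPST09} is a substantially more delicate layered random construction whose analysis is specifically engineered to overcome the $2^n$ union bound; what you have written is the natural first attempt that this construction was designed to circumvent, so the proposal as it stands has a genuine gap at its technical core.
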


%

\subsection{Our Results}

In this paper we consider range families $\CR = \CR_1 \cup \CR_2$ that are the union of two range families $\CR_1$, $\CR_2$.
The corresponding hypergraph $\HH(V,\CR,m)$ is then the union of the hypergraphs $\HH(V,\CR_1,m)$ and $\HH(V,\CR_2,m)$ on the same vertex set $V \subset \RR^2$.
Clearly, if $\HH(V,\CR,m)$ is polychromatic $k$-colorable, then so are $\HH(V,\CR_1,m)$ and $\HH(V,\CR_2,m)$.
But the converse is not necessarily true and this shall be the subject of our investigations.

In \cref{sec:general} we show that if $\CR_1$ and $\CR_2$ admit so-called hitting $k$-cliques, then we can conclude that $m(k) < \infty$ for $\CR = \CR_1 \cup \CR_2$.
This is for example the case for all horizontal (respectively vertical) strips, but already fails for all south-west quadrants.
In \cref{sec:unbounded-rectangles} we then consider all possible families of \emph{unbounded} axis-aligned rectangles, such as axis-aligned strips, all four types of quadrants, or bottomless rectangles.
We determine exactly for which subset of those, when taking $\CR$ as their union, it holds that $m(k) < \infty$.

In particular, we show in \cref{subsec:negative} that $m(k) = \infty$ for all $k \geq 2$ when $\CR = \CR_1 \cup \CR_2$ is the union of $\CR_1$ all bottomless rectangles and $\CR_2$ all horizontal strips.
Our proof gives a new sequence $(\HH_m)_{m \geq 1}$ of $m$-uniform hypergraphs that admit a geometric realization for simple ranges, but do not admit any polychromatic $2$-coloring.
On the positive side, we show in \cref{subsec:positive} that (up to symmetry) all other subsets of unbounded axis-aligned rectangles (excluding the above pair) admit polychromatic $k$-colorings for every $k$.
Here, our proof relies on so-called shallow hitting sets and in particular a variant in which a subset of $V$ hits every hyperedge defined by $\CR_1$ at least once and every hyperedge defined by $\CR_1 \cup \CR_2$ at most a constant (usually $2$ or $3$) number of times.

\paragraph{Assumptions and Notation.}
Before we start, let us briefly mention some convenient facts that are usually assumed, and which we also assume throughout our paper:
Whenever a range family $\CR$ is given, we only consider points set $V$ that are in general position with respect to $\CR$.
For us, this means that the points in $V$ have pairwise different $x$-coordinates, pairwise different $y$-coordinates, and also pairwise different sums of $x$- and $y$-coordinates.
Secondly, all range families $\CR$ that we consider here are \emph{shrinkable}, meaning that whenever a set $X \subseteq V$ of $i$ points is captured by a range in $\CR$, then also some subset of $i-1$ points of $X$ is captured by a range in $\CR$.
This means that for every polychromatic $k$-coloring of $\HH(V,\CR,m)$, every range in $\CR$ capturing $m$ or more points of~$V$, contains at least one point of each color, thus giving $m(2) \leq m(3) \leq \cdots$ as mentioned above. 
Finally, for every set $X \subseteq V$ captured by a range in $\CR$, we implicitly associate to $X$ one arbitrary but fixed such range $R \in \CR$ with $V \cap R = X$. 
In particular, we shall sometimes consider \emph{the} range $R$ for a given hyperedge $E$ of $\HH(V,\CR,m)$.

\section{Polychromatic Colorings for Two Range Families}\label{sec:general}

Let $\CR_1,\CR_2$ be two families of ranges, for each of which it is known that $m(k) < \infty$ for any $k \geq 1$.
We seek to investigate whether also for $\CR = \CR_1 \cup \CR_2$ we have $m(k) < \infty$.
First, we identify a simple sufficient condition.

For fixed $k,m,\CR$, we say that we have \emph{hitting $k$-cliques} if the following holds.
For every $V \subset \RR^2$ there exist pairwise disjoint $k$-subsets of $V$ such that every hyperedge of $\HH(V,\CR,m)$ fully contains at least one such $k$-subset.
Clearly, if we have hitting $k$-cliques, then $m(k) \leq m$ since we can simply use all colors $1,\ldots,k$ on each such $k$-subset (and color any remaining vertex arbitrarily).

\begin{thm}\label{thm:hitting-cliques}
 For fixed $k,m$, suppose that we have hitting $k$-cliques with respect to $\CR_1$ and hitting $k$-cliques with respect to $\CR_2$.
 Then for $\CR = \CR_1 \cup \CR_2$ it holds that $m(k) \leq m$.
\end{thm}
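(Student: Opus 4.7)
Let $\mathcal{A}$ and $\mathcal{B}$ denote the families of pairwise disjoint $k$-subsets guaranteed by the hitting $k$-clique assumption for $\CR_1$ and $\CR_2$, respectively. It suffices to produce a single $k$-coloring $c \colon V \to [k]$ whose restriction to every $A \in \mathcal{A}$ and to every $B \in \mathcal{B}$ is a bijection onto $[k]$: any hyperedge of $\HH(V,\CR,m)$ lies in $\HH(V,\CR_1,m)\cup\HH(V,\CR_2,m)$ and therefore fully contains some $A\in\mathcal{A}$ or some $B\in\mathcal{B}$, which is already rainbow, making the hyperedge polychromatic.

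\textbf{Main step.} To construct such a $c$, I would reduce to bipartite edge coloring. Build a bipartite multigraph $G$ with vertex classes $\mathcal{A}$ and $\mathcal{B}$ and one edge $AB$, labeled by $v$, for every vertex $v \in V$ lying in some $A\in\mathcal{A}$ and some $B\in\mathcal{B}$ simultaneously. Since the sets inside $\mathcal{A}$ (resp.\ inside $\mathcal{B}$) are pairwise disjoint and have size $k$, every vertex of $G$ has degree at most $k$. Hence by K\"onig's edge-coloring theorem, $G$ admits a proper edge coloring with $k$ colors; define $c(v)$ to be the color of the edge labeled $v$ for each such $v$.

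\textbf{Leftover vertices and anticipated obstacle.} For each $A \in \mathcal{A}$, the vertices of $A$ not yet colored are exactly those in $A \setminus \bigcup \mathcal{B}$, and their number equals $k$ minus the number of edges incident to $A$ in $G$, which in turn equals the number of colors still unused on $A$. I would assign these vertices the unused colors in any bijective manner, and do the symmetric thing for each $B \in \mathcal{B}$; any vertex of $V$ outside $\bigcup \mathcal{A} \cup \bigcup \mathcal{B}$ gets an arbitrary color. Every $A \in \mathcal{A}$ and every $B \in \mathcal{B}$ is then rainbow, completing the argument. The step I expect to be the most delicate is coordinating the $\mathcal{A}$- and $\mathcal{B}$-constraints on shared vertices, but the disjointness inside each family caps the relevant maximum degree of $G$ at exactly $k$, so K\"onig's theorem applies and the two sets of constraints become automatically compatible, without any need for a probabilistic argument.
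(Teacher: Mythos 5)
Your proof is correct and uses the same core idea as the paper: form the bipartite multigraph on $\mathcal{A}\cup\mathcal{B}$ with one edge per vertex of $V$ lying in both a clique of $\mathcal{A}$ and a clique of $\mathcal{B}$, properly $k$-edge-color it by K\"onig, and read off the vertex coloring. The only cosmetic difference is that the paper encodes the vertices lying in a clique from just one family as loops and edge-colors everything at once (so every clique-vertex has degree exactly $k$), whereas you first edge-color the loopless bipartite multigraph and then greedily extend to those leftover vertices using the unused colors on each clique; both are valid and give $m(k)\le m$.
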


\begin{proof}
 Consider $V \subset \RR^2$, a set $A$ of hitting $k$-cliques of $\HH(V,\CR_1,m)$ and a set $B$ of hitting $k$-cliques of $\HH(V,\CR_2,m)$.
 Let $G = (A\cup B, \EE)$ be the hypergraph with one hyperedge $E_v \in \EE$ for each vertex $v \in V$ such that for~$X \in A \cup B$, we have $X \in E_v$ if and only if $v \in X$.
 Removing the empty hyperedges from $G$, we have that $G$ is a bipartite multigraph together with some additional loops.
 As every vertex of $G$ has degree exactly $k$ (loops contribute only once to a vertex degree), it follows that $G$ admits a proper $k$-edge-coloring~\cite{B73}, which interpreted as a $k$-coloring of $V$ gives the result since every $k$-clique gets all $k$ colors.
\end{proof}

As a corollary, we easily reprove the upper bound on $m(k)$ from~\cite{ACCIKLSST11} when $\CR$ consists of all axis-parallel strips (or more generally, all cones with apex at one of two fixed points).

\begin{cor}\label{cor:aligned-strips}
 For $\CR$ the family of all axis-aligned strips and any $k$ we have $m(k) \leq 2k-1$.
\end{cor}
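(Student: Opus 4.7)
The plan is to apply \cref{thm:hitting-cliques} with $m = 2k-1$. Write $\CR = \CR_h \cup \CR_v$, where $\CR_h$ is the family of all horizontal strips and $\CR_v$ the family of all vertical strips. It suffices to exhibit, for any point set $V$ in general position, hitting $k$-cliques of $\HH(V,\CR_h,2k-1)$, since the same construction applied to $\CR_v$ (with the roles of the coordinates swapped) yields hitting $k$-cliques for the other family, and then \cref{thm:hitting-cliques} gives $m(k) \leq 2k-1$ immediately.

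To build the hitting $k$-cliques for $\CR_h$, I would enumerate the points of $V$ as $p_1, p_2, \ldots, p_n$ in increasing order of $y$-coordinate and partition them into consecutive blocks
\[
B_j = \{p_{(j-1)k+1}, p_{(j-1)k+2}, \ldots, p_{jk}\}, \qquad j=1,2,\ldots,\lfloor n/k \rfloor,
\]
discarding any leftover tail of fewer than $k$ points. Each $B_j$ is captured by a thin horizontal strip that separates $p_{(j-1)k}$ from $p_{(j-1)k+1}$ and $p_{jk}$ from $p_{jk+1}$, so every $B_j$ is indeed a $k$-clique of $\HH(V,\CR_h)$, and the blocks are pairwise disjoint by construction.

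The one thing to verify is that every hyperedge of $\HH(V,\CR_h,2k-1)$ fully contains some $B_j$. By general position (and shrinkability), any such hyperedge is of the form $\{p_i, p_{i+1}, \ldots, p_{i+2k-2}\}$ for some $i$. Writing $i-1 = qk + r$ with $0 \leq r < k$, if $r = 0$ then the block $B_{q+1}$ is exactly the first $k$ points of the hyperedge; if $r \geq 1$, then $B_{q+2}$ starts at index $(q+1)k+1 \geq i$ (since $k > r$) and ends at index $(q+2)k \leq i + 2k - 2$ (since $r \geq 1$), hence lies inside the hyperedge. In either case, some block is fully contained, so the $B_j$ form hitting $k$-cliques for $\CR_h$.

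Running the identical argument with the $x$-coordinate in place of the $y$-coordinate yields hitting $k$-cliques for $\CR_v$, and \cref{thm:hitting-cliques} then gives $m(k) \leq 2k-1$, as claimed. There is no real obstacle here; the only step that deserves care is the elementary arithmetic showing that any $2k-1$ consecutive indices contain a full block of $k$ consecutive indices.
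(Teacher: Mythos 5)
Your proposal is correct and follows exactly the paper's argument: partition the points into consecutive blocks of $k$ along each coordinate to obtain hitting $k$-cliques for horizontal and vertical strips separately, then invoke \cref{thm:hitting-cliques}. You merely spell out the arithmetic showing that any $2k-1$ consecutive points contain a full block, which the paper leaves implicit.
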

\begin{proof}
 For $m = 2k-1$, the family $\CR_1 = \{ \{(x,y) \in \RR^2 \mid a_1 \leq x \leq a_2\} \mid a_1,a_2 \in \RR, a_1 < a_2\}$ of all vertical strips has hitting $k$-cliques by grouping always $k$ points with consecutive $x$-coordinates, leaving out the last up to $k-1$ points.
 Symmetrically, we have hitting $k$-cliques for the family $\CR_2$ of all horizontal strips.
 As $\CR = \CR_1 \cup \CR_2$, we have $m(k) \leq m = 2k-1$ by \cref{thm:hitting-cliques}.
\end{proof}

Somewhat unfortunately, hitting $k$-cliques appear to be very rare.
Already for the range family $\CR$ of all south-west quadrants, for which one can easily show that $m(k)=k$, we do not even have hitting $2$-cliques for any $m$.
This will follow from the following result, which will also be useful later.

\begin{lem}\label{lem:tree-with-quadrants}
 Let $T$ be a rooted tree, and $\HH(T)$ be the hypergraph on $V(T)$ where for each leaf vertex its ancestors (including itself) form a hyperedge.
 Then $\HH(T)$ can be realized with the family $\CR$ of all south-west quadrants.
 
 Moreover, the root is the bottommost and leftmost point and the children of each non-leaf vertex lie on a diagonal line of slope $-1$. 
\end{lem}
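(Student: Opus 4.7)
I plan to proceed by induction on $|V(T)|$, with a strengthened inductive statement so that each recursive subtree fits inside an arbitrarily small window. The stronger claim is: for every rooted tree $T$ and every open axis-aligned rectangle $B\subset\RR^2$, one can embed $V(T)$ so that the root sits at the south-west corner of $B$, every other vertex sits in the interior of $B$, every corner of a south-west quadrant used to realize $\HH(T)$ lies in $\overline{B}$, and the children of every non-leaf vertex lie on a common line of slope $-1$. The base case $|V(T)|=1$ is immediate: place the single vertex at the south-west corner of $B$ and let its quadrant have corner at that point.

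For the inductive step, let $r$ be the root of $T$ with children $c_1,\dots,c_d$ rooting subtrees $T_1,\dots,T_d$. I place $r$ at the south-west corner of $B$ and $c_1,\dots,c_d$ strictly inside $B$ on a common line of slope $-1$, indexed so that $c_i$ has the $i$-th smallest $x$-coordinate (and hence $i$-th largest $y$-coordinate). Next, I choose pairwise disjoint open axis-aligned rectangles $B_1,\dots,B_d\subset B$ so that $c_i$ is the south-west corner of $B_i$ and every point of $\overline{B_i}$ has $x$-coordinate strictly less than that of $c_{i+1}$ (for $i<d$) and $y$-coordinate strictly less than that of $c_{i-1}$ (for $i>1$). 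Because the $c_i$ lie on a line of slope $-1$ and the boxes $B_i$ can be chosen arbitrarily thin, such $B_i$ exist. Applying the inductive hypothesis inside each $B_i$ realizes $\HH(T_i)$ with $c_i$ as root.

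To check that this gives a realization of $\HH(T)$, fix a leaf $\ell$ belonging to $T_i$ and let $Q'_\ell$ be the south-west quadrant from the recursive realization of $T_i$ that captures exactly the ancestors of $\ell$ in $T_i$; by induction its corner lies in $\overline{B_i}$ and hence is strictly north-east of $r$. Viewed in the full embedding, $Q'_\ell$ additionally captures $r$ for precisely this reason. For $j\neq i$, the exclusion property of $B_i$ ensures that neither $c_j$ nor any vertex of $T_j$ is captured by $Q'_\ell$: if $j>i$ these vertices have $x$-coordinate at least that of $c_j$, which exceeds every $x$-coordinate appearing in $\overline{B_i}$, and the case $j<i$ is symmetric in $y$. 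Hence $Q'_\ell$ captures exactly the ancestors of $\ell$ in $T$, as required; the root, placed at the south-west corner of $B$, is the bottommost and leftmost vertex, and children of each non-leaf lie on a line of slope $-1$ by construction and induction. The main subtlety is ensuring the recursively produced quadrant corners remain inside the assigned boxes $B_i$, which is the reason for the strengthening of the inductive statement; everything else is routine bookkeeping about box sizes.
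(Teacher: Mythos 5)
Your proof is correct and follows essentially the same route as the paper's: induct, place the children of the root on a line of slope $-1$, and recurse into small pairwise "incomparable" boxes north-east of each child (the paper uses unit boxes at $(i,p-i)$ and scales the recursive embeddings, while you carry a prescribed target box through the induction). Your version just spells out the separation conditions that the paper leaves implicit.
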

\begin{proof}
 We do induction on the height of $T$, with height $1$ being a trivial case of a single vertex.
 For height at least $2$, remove the root $r$ from $T$ to obtain new trees $T_1,\ldots,T_p$, each of smaller height and rooted at a child of $r$.
 By induction, there are point sets in the plane for each $\HH(T_i)$, $i=1,\ldots,p$, with each respective root being bottommost and leftmost. We scale each of these points sets uniformly until the bounding box of each of them has width as well as height less than $1$. For every $i \in [p]$, we put the point set for $\HH(T_i)$ into the plane so that the root of $T_i$ has the coordinate $(i, p - i)$. Finally, we place $r$ in the origin. This gives the desired representation. 
\end{proof}

\begin{cor}\label{cor:no-hitting-quadrants}
 For $k=2$, $\CR$ the family of all south-west quadrants, and any $m \geq 2$, we do not have hitting $k$-cliques.
\end{cor}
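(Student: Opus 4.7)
The plan is to combine \cref{lem:tree-with-quadrants} with a purely combinatorial non-existence statement for a convenient tree. I take $T_m$ to be the complete binary tree in which every root-to-leaf path has exactly $m$ vertices; by \cref{lem:tree-with-quadrants} there is a point set $V \subset \RR^2$ such that, for every leaf $\ell$ of $T_m$, the $m$-element set of ancestors of $\ell$ is captured by some south-west quadrant and is therefore a hyperedge of $\HH(V,\CR,m)$. Any hitting $2$-cliques for $\HH(V,\CR,m)$ must contain a pair inside each such leaf-ancestor hyperedge, and two vertices lie together in a leaf-ancestor hyperedge precisely when they form an ancestor--descendant pair of $T_m$. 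Hence it is enough to show: \emph{for every $m \ge 2$, there is no collection of pairwise disjoint ancestor--descendant pairs of $T_m$ that meets every root-to-leaf path.}

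I would prove this claim by induction on $m$. The base case $m=2$ is immediate: $T_2$ has a root $r$ with two leaves $\ell_1,\ell_2$, the only admissible pairs are $\{r,\ell_1\}$ and $\{r,\ell_2\}$, both are needed to meet the two leaf paths, yet they share $r$. For the inductive step, let $u_1,u_2$ be the children of $r$, rooting two subtrees $T_{m-1}^{(1)},T_{m-1}^{(2)}$, each a copy of the complete binary tree of depth $m-1$. Every ancestor--descendant pair of $T_m$ is either of the form $\{r,v\}$ with $v$ in a unique $T_{m-1}^{(i)}$, or lies entirely in one of the two subtrees. If no chosen pair uses $r$, then the chosen pairs split into two sub-collections, one inside each $T_{m-1}^{(i)}$, and each sub-collection must itself hit every leaf path of that subtree, contradicting the induction hypothesis. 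If instead some chosen pair is $\{r,v\}$ with, say, $v\in T_{m-1}^{(1)}$, then disjointness forbids any other chosen pair from using $r$, and so the pairs covering the leaf paths through $T_{m-1}^{(2)}$ must all lie entirely inside $T_{m-1}^{(2)}$; applying the induction hypothesis to $T_{m-1}^{(2)}$ again yields a contradiction.

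The main delicate point is the observation that disjointness forces the pairs responsible for covering one subtree's leaf paths to live inside that subtree once the root is unavailable; this is exactly what lets the induction close. Everything else amounts to bookkeeping: note in particular that additional hyperedges of $\HH(V,\CR,m)$ beyond the leaf-ancestor sets would only make hitting $2$-cliques harder to achieve, so restricting attention to this family is safe, and any necessary general-position perturbation of the point set from \cref{lem:tree-with-quadrants} preserves the leaf-ancestor capture relations.
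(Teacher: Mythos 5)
Your proof is correct and follows essentially the same route as the paper: realize a leaf-ancestor tree hypergraph via \cref{lem:tree-with-quadrants} and then show by induction that no collection of disjoint ancestor--descendant pairs can cover every root-to-leaf path, using the fact that the root can be paired into at most one subtree. The only difference is that you use the complete binary tree of depth $m$ where the paper uses the complete $m$-ary tree of depth $m$; your choice works equally well (arguably with slightly cleaner bookkeeping) since only the branching at the root and the induction on path length matter.
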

\begin{proof}
 Take the rooted complete $m$-ary tree $T_m$ of depth $m$, for which $\HH(T_m)$ is realizable with south-west quadrants by \cref{lem:tree-with-quadrants}.
 By induction on $m$, we show that $\HH(T_m)$ does not have hitting $2$-cliques. This is trivial for $m=2$.
 Otherwise, any collection of disjoint $2$-subsets either avoids the root $r$, or pairs $r$ with a vertex in one of the $m \geq 2$ subtrees of $T_m$ below $r$.
 In any case, there is a subtree $T$ below $r$, none of whose vertices is paired with $r$ and hence, there exist hitting 2-cliques of $\HH(T)$. Note that $T$ is a complete $m$-ary tree of depth $m-1$ so it contains $T_{m-1}$ as a subtree. But then $T_{m-1}$ admits hitting $2$-cliques too. A contradiction to induction hypothesis.

\end{proof}

\begin{cor}\label{cor:no-hitting-halfplanes}
 For $k=2$, $\CR$ the family of all halfplanes, and any $m \geq 2$, we do not have hitting $k$-cliques.
\end{cor}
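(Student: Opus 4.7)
The plan is to follow the strategy of \cref{cor:no-hitting-quadrants}. The combinatorial induction there—showing that $\HH(T_m)$ admits no hitting $2$-cliques—depends only on the abstract tree hypergraph and not on how it is realized. Hence it suffices to exhibit a halfplane realization of $\HH(T_m)$ for each $m$; the same inductive argument on subtrees then carries over and yields the conclusion.

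For the realization I would proceed recursively. Place the root of $T_m$ at the origin and place its $m$ children on a strictly convex curve, say on the hyperbola $\{xy = 1\}$ at positions $p_i = (2^i, 2^{-i})$. A short AM--GM computation shows that the tangent line at each $p_i$ separates the pair $\{\text{origin}, p_i\}$ (lying on or below) from every other child $p_j$ (lying strictly above). Each subtree is then embedded inside a tiny disk $D_i$ placed just below the corresponding tangent and around $p_i$. To make the recursion close up, I would strengthen the inductive hypothesis so that the realization fits inside any prescribed disk and uses only halfplanes whose boundary lines have slopes in any prescribed narrow interval. Invoking this strengthened hypothesis for each $D_i$ with the slope of the corresponding tangent, every inner halfplane (capturing some ancestor path inside the subtree) extends, by a small translation, to an outer halfplane that additionally contains the origin and excludes the other children, while still realizing the same inner hyperedge inside $D_i$.

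The main obstacle is this slope alignment across recursion levels: unlike south-west quadrants, halfplanes cannot isolate a corner directly, so the direction of each inner halfplane must match that of the outer tangent in order for the outer extension to separate the origin from the siblings $p_j$ while still agreeing with the inner halfplane on $D_i$. The strengthened inductive hypothesis, combined with the freedom to rotate and shrink each inner copy, is the mechanism that makes this alignment possible at every recursive step and closes the construction.
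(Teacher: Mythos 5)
Your proposal is correct in substance but takes a genuinely different route from the paper. The paper disposes of this corollary in one line: it invokes the result of Middendorf and Pfeiffer~\cite{MP92} that every range capturing hypergraph realizable by south-west quadrants is also realizable by halfplanes, and then the conclusion is immediate from \cref{cor:no-hitting-quadrants}. You instead rebuild the halfplane realization of $\HH(T_m)$ from scratch: children of the root on a convex arc, tangent lines doing the separation, and a strengthened induction (prescribed disk, prescribed narrow cone of normal directions) to make the recursion close. This is essentially a self-contained proof of the special case of the Middendorf--Pfeiffer reduction that is actually needed, so it buys independence from the citation at the cost of a genuinely technical geometric induction; the paper's route buys brevity. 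Two remarks on your plan. First, you correctly identify the slope-alignment across recursion levels as the crux, and your mechanism (normals of all inner halfplanes confined to a narrow cone around the outer tangent direction, each subtree in a disk small enough that the other tangent inequalities $b/a+a/b>2$ survive the perturbation) is the right one; you should also record the \emph{orientation} of each halfplane in the inductive hypothesis, i.e.\ that its closed side faces the origin, since then the inner halfplane already contains the origin and excludes the other disks without any translation. Second, your plan does not address whether the realization is \emph{exact}, i.e.\ whether $\HH(V,\CR,m)$ might contain $m$-sets captured by halfplanes beyond the hyperedges of $\HH(T_m)$. For this corollary that is harmless --- extra hyperedges only make hitting $2$-cliques harder to exist, so non-existence for the subhypergraph $\HH(T_m)$ suffices --- but you should say so explicitly, since the paper's notion of realization as used elsewhere requires equality of hypergraphs.
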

\begin{proof}
 By a result of \citet{MP92}, every range capturing hypergraph for south-west quadrants can also be realized by halfplanes and the result follows from~\cref{cor:no-hitting-quadrants}.
\end{proof}

To summarize, parallel strips have hitting $k$-cliques, but quadrants do not.
Hence, we can not apply \cref{thm:hitting-cliques} to conclude that maybe we have $m(k) < \infty$ when we consider~$\CR$ to be the union of all quadrants of one direction and all parallel strips of one direction.
In \cref{sec:unbounded-rectangles} we shall prove that indeed $m(k) < \infty$ for the union of all quadrants and strips, however only provided that the strips are axis-aligned.
In fact, if they are not, this is not necessarily true.

\begin{cor}\label{cor:quadrants-and-diagonal}
 Let $\CR_1$ be the family of all south-west quadrants and $\CR_2 = \{ \{(x,y) \in \RR^2 \mid a \leq x+y \leq b\} \mid a,b \in \RR\}$ be the family of all diagonal strips of slope $-1$.
 
 Then for $\CR = \CR_1 \cup \CR_2$ we have $m(2) = \infty$.
\end{cor}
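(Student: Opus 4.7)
The plan is to realize, for each $m \ge 2$, the $m$-ary tree hypergraph $\HH_m$ of \citet{PTT07} as a subhypergraph of $\HH(V,\CR,m)$ for some point set $V$ and $\CR = \CR_1 \cup \CR_2$. Since $\HH_m$ admits no polychromatic $2$-coloring (as already noted in Section~1), neither does any hypergraph containing it as a subhypergraph, so such a realization yields $m(2) = \infty$. Recall that $\HH_m$ lives on the complete $m$-ary tree of depth $m$, with hyperedges of two types: (i) the $m$ children of each non-leaf vertex, and (ii) the $m$ strict ancestors of each leaf.

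To construct the realization, I would adapt the recursive placement of \cref{lem:tree-with-quadrants}. Place the root at $(0,0)$. For every non-leaf vertex $u$ already placed at $(x_u,y_u)$, assign a positive gap $\epsilon_u > 0$ and place its $i$-th child at $(x_u + i\,\epsilon_u,\, y_u + (m+1-i)\,\epsilon_u)$ for $i = 1,\ldots,m$. Choose the gaps so that (a)~$\epsilon_{u'} \ll \epsilon_u$ whenever $u'$ is a child of $u$, which keeps subtrees disjoint and preserves the south-west quadrant structure of \cref{lem:tree-with-quadrants}, and (b)~the family $\{\epsilon_u\}_{u \text{ non-leaf}}$ is algebraically independent over $\mathbb{Q}$.

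By construction, the $m$ children of any non-leaf $u$ lie on the line $x+y = c_u := x_u + y_u + (m+1)\epsilon_u$, and one checks by induction that $x_v + y_v = (m+1)\sum_{u \in \mathrm{StrAnc}(v)} \epsilon_u$ for every vertex $v$. Algebraic independence then implies that $x_v+y_v$ uniquely determines $\mathrm{StrAnc}(v)$, so $c_u = x_w + y_w$ holds if and only if $\mathrm{StrAnc}(w) = \mathrm{StrAnc}(u) \cup \{u\}$, i.e., $w$ is a child of $u$. Hence a sufficiently thin diagonal strip $\{|x+y - c_u| \le \delta_u\} \in \CR_2$ captures exactly the $m$ children of $u$, realizing every type~(i) hyperedge. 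The type~(ii) hyperedges are handled as in \cref{lem:tree-with-quadrants}: for every leaf $\ell$ with parent $p$, the south-west quadrant with corner $(x_p,y_p)$ captures exactly $\{p\}\cup \mathrm{StrAnc}(p) = \mathrm{StrAnc}(\ell)$.

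The main obstacle is to satisfy the two requirements on the $\epsilon_u$ simultaneously---rapid decrease from parent to child (needed for the quadrant argument) and algebraic independence (needed for the strip argument). This is readily arranged, as algebraic independence is a generic condition: in any open interval of positive reals, the subset of reals whose adjunction preserves independence over $\mathbb{Q}$ is cocountable, so we can pick each $\epsilon_u$ from a tiny interval below, say, $\epsilon_{\text{parent}}/(10m)$ while keeping the whole collection algebraically independent. A final small generic perturbation, of magnitude smaller than the minimum gap between distinct values in $\{x_v+y_v : v \in V\} \cup \{c_u : u \text{ non-leaf}\}$, enforces the general position convention from the excerpt without breaking any realization.
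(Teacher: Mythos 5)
Your proposal is correct and follows essentially the same route as the paper: realize the $m$-ary tree hypergraph of \citet{PTT07} via the recursive placement of \cref{lem:tree-with-quadrants}, with south-west quadrants capturing the ancestor sets and thin diagonal strips capturing the sibling sets. The paper simply invokes \cref{lem:tree-with-quadrants} and a slight perturbation to make the slope~$-1$ lines pairwise distinct, where you re-derive the embedding with explicit coordinates and use algebraic independence (linear independence over $\mathbb{Q}$ would already suffice) for the same purpose.
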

\begin{proof}
 Given $m$, consider the rooted complete $m$-ary tree $T_m$ of depth $m$.
 By \cref{lem:tree-with-quadrants},~$V(T_m)$ can be placed in the plane such that for each leaf vertex, its $m$ ancestors (including itself) are captured by a south-west quadrant, and for each non-leaf vertex, its $m$ children are captured by a diagonal strip.
 Using a slight pertubation of the points, we can ensure that the lines of slope $-1$ guaranteed by \cref{lem:tree-with-quadrants} are pairwise distinct.
 Hence, every $m$-ary tree hypergraph $\HH_m$ can be represented with $\CR$. By \cite{PTT07} the hypergraph $\HH_m$ does not admit a polychomatic $2$-coloring for every $m$, which gives the result.
\end{proof}

\section{Families of Unbounded Rectangles}
\label{sec:unbounded-rectangles}

In this section we consider the following range families of unbounded rectangles:
\begin{itemize}
    \item all (axis-aligned) south-west quadrants $\CR_{\rm SW}$,
    
    \item similarly all south-east $\CR_{\rm SE}$, north-east $\CR_{\rm NE}$, north-west $\CR_{\rm NW}$ quadrants,
    
    \item all horizontal strips $\CR_{\rm HS}$, vertical strips $\CR_{\rm VS}$, diagonal strips $\CR_{\rm DS}$ of slope $-1$,
    
    \item all bottomless rectangles $\CR_{\rm BL}$, and finally all topless rectangles $\CR_{\rm TL} = \{ \{(x,y) \in \RR^2 \mid a_1 \leq x \leq a_2, b \leq y\} \mid a_1,a_2,b \in \RR\}$.
\end{itemize}
Observe that if a point set is captured by a south-east  quadrant $Q$, then it is also captured by a bottomless rectangle having the same top and left sides as $Q$ and whose right side lies to the right of every point in the vertex set. Analogous statements hold for other quadrants and vertical strips. Further, note that each of the above range families, except the diagonal strips $\CR_{\rm DS}$, is a special case of the family of all axis-aligned rectangles.
Recall that for the family of \emph{all} axis-aligned rectangles, it is known that $m(2) = \infty$~\cite{CPST09}.
Here we are interested in the maximal subsets of $\{ \CR_{\rm SW}, \CR_{\rm SE}, \CR_{\rm NE}, \CR_{\rm NW}, \CR_{\rm HS}, \CR_{\rm VS}, \CR_{\rm BL}, \CR_{\rm TL}\}$ so that for the union $\CR$ of all these ranges it still holds that $m(k) < \infty$ for all $k$.
In fact, we shall show that for $\CR = \CR_{\rm BL} \cup \CR_{\rm HS}$ we have $m(2) = \infty$, strengthening the result for axis-aligned rectangles~\cite{CPST09}.
On the other hand, for $\CR = \CR_{\rm SW} \cup \CR_{\rm SE} \cup \CR_{\rm NE} \cup \CR_{\rm NW} \cup \CR_{\rm HS} \cup \CR_{\rm VS}$, i.e., the union of all quadrants and axis-aligned strips, we have $m(k) < \infty$ for all $k$, strengthening the results for south-west quadrants~\cite{KP15} and axis-aligned strips~\cite{ACCIKLSST11}.
Secondly, for $\CR = \CR_{\rm BL} \cup \CR_{\rm TL}$, i.e., the union of bottomless and topless rectangles (which also contains all quadrants and all vertical strips), we again have $m(k) < \infty$ for all $k$, thus strengthening the result for bottomless rectangles~\cite{ACCCHHKLLMRU13}.
Using symmetries, this covers all cases of the considered unbounded axis-aligned rectangles.
We complement our results by also considering the diagonal strips $\CR_{\rm DS}$ and recall that we already know by \cref{cor:quadrants-and-diagonal} that for $\CR = \CR_{\rm DS} \cup \CR_{\rm SW}$ we have $m(2) = \infty$.

\subsection{The Case with no Polychromatic Coloring: Bottomless Rectangles and Horizontal Strips}\label{subsec:negative}



\begin{definition}\label{def:non-2-colorable}
    For every $m \in \NN$, the $m$-uniform hypergraph $\HH_m$ is defined as follows. First we define a rooted forest $F_m$ consisting of $m^m$ trees whose vertices are partitioned into a set of the so-called \emph{stages}. The vertices of a stage $S$ will be totally ordered and we denote this ordering by $<_S$. All vertices of a stage $S$ will have the same distance to the root of the corresponding tree, we refer to this distance as the \emph{level} of $S$. Every stage on level $j \in \{0, \dots, m-1\}$ will consist of $m^{m-j}$ vertices.
    
    To define the rooted forest $F_m$ and the stages, we start with $m^m$ roots, one for each tree in $F_m$.
    They build the unique stage on level $0$ and they are ordered in an arbitrary but fixed way. After that, for every already defined stage $S$ on level $j < m - 1$ and every subset $S' \in {S \choose m^{m-j-1}}$, we add a new stage $T(S')$ on level $j+1$ consisting of $m^{m-j-1}$ new vertices so that every vertex in $S'$ gets exactly one child from $T(S')$ and the vertices of $T(S')$ are ordered by $<_{T(S')}$ as their parents by $<_S$. Informally speaking, every vertex in $S$ gets a child for every $(m^{m-j-1})$-subset of $S$ in which it occurs. 
    
    Now we can define the hypergraph $\HH_m = (V, \EE)$. The vertex set $V$ is exactly $V(F_m)$. There are two types of hyperedges. First, stage-hyperedges $\EE_s$: for every stage $S$, each $m$ consecutive vertices in $<_S$ constitute a stage-hyperedge. Second, the path-hyperedges $\EE_p$: every root-to-leaf path in $F_m$ forms a path-hyperedge. Then, the set of hyperedges is defined as $\EE = \EE_s \cup \EE_p$. Note that $\HH_m = (V,\EE)$ is indeed $m$-uniform. For a vertex $v$, let $\operatorname{root}(v)$ denote the root of the tree in $F_m$ containing $v$, and $\pp(v) \subset V$ denote the vertices on the path from $v$ to $\operatorname{root}(v)$ in $F_m$.
\end{definition}

\begin{thm}
 For every $m \in \NN$ the $m$-uniform hypergraph $\HH_m = (V, \EE = \EE_s \cup \EE_p)$ admits no polychromatic coloring with 2 colors.
\end{thm}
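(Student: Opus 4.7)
The plan is to assume for contradiction that $c \colon V \to \{0,1\}$ is a polychromatic $2$-coloring of $\HH_m$, and then to exhibit a root-to-leaf path all of whose vertices receive the same color, contradicting the polychromaticity of the corresponding path-hyperedge.

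The key observation is a density statement about stages: for any stage $S$ on level $j$, its $m^{m-j}$ vertices can be partitioned in the order $<_S$ into $m^{m-j-1}$ consecutive blocks of size $m$, and each such block is a stage-hyperedge. Since $c$ is polychromatic, each block contains at least one vertex of color $0$, so $S$ contains at least $m^{m-j-1}$ vertices of color $0$ (and symmetrically for color $1$). The boundary case $j=m-1$ is handled by the same argument, giving the single block of size $m$ at least one vertex of color~$0$.

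Using this, I build nested monochromatic subsets $S_0',S_1',\ldots,S_{m-1}'$ recursively. Let $S_0' \subseteq S_0$ be any $m^{m-1}$ vertices of color~$0$, which exist by the density observation. For each $j=1,\ldots,m-1$, given $S_{j-1}'$, consider the stage $T(S_{j-1}')$ on level~$j$; by the density observation it contains at least $m^{m-j-1}$ vertices of color $0$, and I take $S_j'$ to be any $m^{m-j-1}$ of them. By the defining property of $T(\cdot)$, every vertex of $T(S_{j-1}')$, and hence of $S_j' \subseteq T(S_{j-1}')$, has its parent in $S_{j-1}'$.

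Finally, pick any leaf $\ell \in S_{m-1}'$ (which has size~$1$). A downward induction on $k$ yields that the $k$-th ancestor of $\ell$ lies in $S_{m-1-k}'$, so the entire path $\pp(\ell)$ from $\ell$ up to $\operatorname{root}(\ell)$ is contained in $\bigcup_{j=0}^{m-1} S_j'$ and is therefore monochromatic of color $0$. This path is a path-hyperedge of $\HH_m$, yet it contains no vertex of color $1$, contradicting polychromaticity. I expect the only nontrivial conceptual step to be the density observation about stages; after that, the argument is a clean cascade through the levels, exploiting the flexibility we have in choosing the subsets $S_j'$ that define the next stage.
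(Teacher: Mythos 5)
Your proof is correct and follows essentially the same route as the paper's: the same density observation (each stage on level $j$ partitions into $m^{m-j-1}$ disjoint stage-hyperedges, forcing that many vertices of one color) and the same level-by-level cascade choosing $S_j' \subseteq T(S_{j-1}')$, which is exactly the paper's sequence $B_j$ with the path-monochromaticity invariant unfolded into a final downward induction. No gaps.
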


\begin{proof}
    We show that every $2$-coloring of $V$ that makes all stage-hyperedges polychromatic necessarily produces a monochromatic path-hyperedge. Let ${\phi: V \rightarrow \{\text{red, blue}\}}$ be such a coloring. The key observation is that a stage $S$ on level $j$ (i.e., one that contains $m^{m-j}$ vertices) can be partitioned into $m^{m-j} / m = m^{m-j-1}$ disjoint stage-hyperedges and hence, it contains at least $m^{m-j-1}$ red vertices.
    
    We prove for $j \in \{0, \dots, m-1\}$ that there is a stage $S_j$ on level $j$ and a subset $B_j \subset S_j$ such that $|B_j| = m^{m-j-1}$ and for every vertex $v \in B_j$, the vertices in $\pp(v)$ are all red. For $j = 0$, the stage consisting of roots contains at least $m^{m-1}$ red roots and these vertices have the desired property. Suppose the statement holds for some $j$. Consider the stage $S_{j+1} = T(B_j)$ and a set $B_{j+1}$ of $m^{m-j-2}$ red points in it. By definition, each of these points $v$ has its parent in $B_j$ and hence, all vertices in $\pp(v)$ are red.
    So the statement holds for $j+1$, too. By induction, it holds for $j = m-1$ and hence, there is a vertex $v$ on level $m-1$ (i.e., a leaf) whose root-to-leaf path is completely red. So $\HH_m$ admits no polychromatic 2-coloring.  
\end{proof}

\begin{thm}
 For every $m \in \NN$ the $m$-uniform hypergraph $\HH_m = (V, \EE = \EE_s \cup \EE_p)$ admits a realization with bottomless rectangles and horizontal strips.
\end{thm}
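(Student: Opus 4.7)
The plan is to place the vertices of $F_m$ explicitly in the plane and verify that horizontal strips realize all stage-hyperedges while bottomless rectangles realize all path-hyperedges.

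For the $y$-coordinates I would give each stage $S$ its own $y$-interval $I_S$, placing the vertices of $S$ at $|S|$ consecutive $y$-values inside $I_S$ in the order $<_S$. The intervals $I_S$ are pairwise disjoint, so any $m$ consecutive vertices in $<_S$ occupy a block of $y$-values that no other vertex lies in, and hence are precisely captured by a horizontal strip. This immediately takes care of every stage-hyperedge.

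For the $x$-coordinates I would place each tree of $F_m$ inside its own vertical column, and then recursively nest sub-columns for every non-leaf vertex $v$ and its children: $v$ sits at one extreme of its column and its children occupy disjoint sub-columns on one side of $v$. The crucial rule is \emph{anti-correlation}: among the children of $v$, the one with the lowest $y$-coordinate is placed in the sub-column farthest from $v$ in $x$, and the one with the highest $y$-coordinate in the closest sub-column. For every root-to-leaf path $v_0,v_1,\dots,v_{m-1}$, the candidate bottomless rectangle has $x$-range reaching from $x(v_0)$ to $x(v_{m-1})$ in the common column and top just above $y(v_{m-1})$. A vertex outside the path whose $x$-offset from $v_0$ is larger than $x(v_{m-1})-x(v_0)$ falls outside the $x$-range, while one with a smaller offset has, by the anti-correlation rule, $y$-coordinate strictly greater than $y(v_{m-1})$ and so falls above the top.

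The main obstacle is making this verification run cleanly once the path has length more than two. For $m\geq 3$ the bottomless rectangle must avoid, besides the siblings of $v_1$, also cousins and more distant relatives of $v_{m-1}$ whose $y$-coordinates might lie below $y(v_{m-1})$ simply because they were placed in ``earlier'' stages. I would resolve this by ordering the stages in $y$ according to a depth-first traversal of the stage tree, so that all vertices of $F_m$ which are descendants of $v_j$ sit in a contiguous $y$-block lying above $y(v_j)$ and below the $y$-block of any vertex not descending from $v_j$. Once this stage ordering is in place, a straightforward induction on the level $j$ — using the anti-correlated sub-column rule to dispose of siblings at each level and the DFS ordering to confine cousin subtrees to $y$-ranges above $y(v_{m-1})$ — shows that the bottomless rectangle of every root-to-leaf path captures exactly its $m$ path-vertices, completing the realization.
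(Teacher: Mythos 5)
Your construction is essentially the paper's: the stages are embedded into pairwise disjoint horizontal strips whose vertical order puts a stage's descendant stages contiguously just above it (which is exactly your DFS ordering of the stage tree), and the $x$-coordinates make each vertex's children anti-correlated in $x$ and $y$ — the paper realizes your nested sub-columns by giving every child its parent's $x$-coordinate plus an infinitesimally small rightward shift. Your inductive verification along a root-to-leaf path is the paper's L-shaped-difference argument, so the approach is correct and the same in substance.
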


\begin{proof}
    For a point $p \in \RR^2$, let $x(p)$ and $y(p)$ denote its $x$- respectively $y$-co\-or\-di\-nate.
    We call a sequence of points $p_1, \dots, p_t$ \emph{ascending} (respectively \emph{descending}) if ${x(p_1) < \dots < x(p_t)}$ and ${y(p_1) < \dots < y(p_t)}$ (respectively ${x(p_1) < \dots < x(p_t)}$ and ${y(p_1) > \dots > y(p_t)}$). Writing about the vertices of a stage $S$, we always refer to their ordering in $<_S$. 
    We shall embed each stage $S$ of $\HH_m$ into a closed horizontal strip, denoted $H_S$, in such a way that $H_S \cap H_{S'} = \emptyset$ whenever $S \neq S'$.
    Note that this way, the embedded stages are vertically ordered with some available space between any two consecutive ones.  
    
    First, we embed the roots of $F_m$, i.e., the unique stage on level $0$, as an ascending sequence in a horizontal strip for this stage. After that, we choose some stage $S$ that has already been embedded but the stages $T_1, \ldots, T_r$ containing its children not yet. In one step we embed $T_1,\ldots,T_r$ as follows. We pick a thin horizontal strip $H$ between $H_S$ and the strip above (if it exists) and within $H$ identify disjoint horizontal strips $H_1, \dots, H_r$. Then, every $T_i$ is embedded inside $H_i$ so that every vertex gets initially the same $x$-coordinate as its parent and the vertices of $T_i$ build an ascending sequence in $H_i$. After that, for every $p \in S$ we slightly shift all children of $p$ to the right so that they build a descending sequence but the ordering of $x$-coordinates relative to all other points remains unchanged.

    The arising embedding ensures the following two properties. First, every stage-hyperedge is captured by a horizontal strip. Second, for every vertex $v$, the bottomless rectangle $B(v)$ with top-right corner $v$ and $\operatorname{root}(v)$ on the left side captures exactly $\pp(v)$. The first property holds since every stage $S$ is embedded as an ascending sequence (along $<_S$) in a thin horizontal strip $H_S$ and these horizontal strips are pairwise disjoint. We prove the second property by induction on the level of the unique stage $T$ containing $v$.
    If $T$ is the stage on level $0$, then $v$ is a root and clearly $B(v)$ contains only $v$.
    Otherwise, let $w$ be the parent of $v$ and $S$ be the stage containing $w$.
    Then $B(v)$ arises from $B(w)$ by extending its top-right corner from $w$ to $v$, giving their difference $D = B(v) - B(w)$ an L-shape as illustrated in \cref{fig:stage-bottomless}.
    We claim that $D$ contains only the point $v$ from $V$, which by induction then gives $B(v) \cap V = (B(w) \cap V) \cup (D \cap V) = \pp(w) \cup \{v\} = \pp(v)$, as desired.
    
    \begin{figure}
        \centering
        \includegraphics{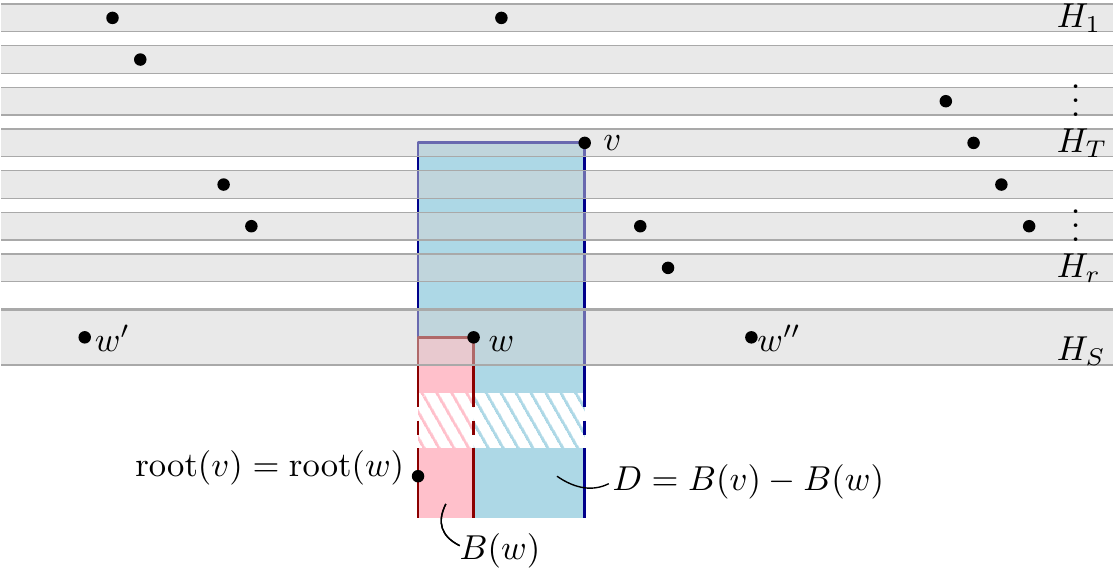}
        \caption{For stage $S$ on level $j$, embedding all stages containing the children of vertices in $S$ into thin horizontal strips slightly above the strip $H_S$ for $S$.}
        \label{fig:stage-bottomless}
    \end{figure}
    
    To see that $D \cap V = \{v\}$, first consider the step in which $v$ was embedded.
    Since $v$ lies slighty to the right of $w$ and the vertices of $S$ form an increasing sequence, no further vertex of $S$ belongs to $D$.
    All vertices lying vertically between $w$ and $v$ also lie between $H_S$ and $H_T$ so in this step all such vertices have their parent in $S$ (or they belong to $S$ but we have already excluded this case). 
    Each of these vertices lies slightly to the right of its parent. Since $B(w) \cap H_S = \{w\}$, the only vertices that could lie in $D$ above $w$ are the children of $w$. But they form a decreasing sequence so $v$ is the only such vertex. 
    Since $v$ lies slightly to the right of $w$, there is also no further point in $D$ below $w$ in this step.
    Finally, each further point (i.e., embedded after this step) is embedded above and ever so slightly to the right of its parent.
    As every vertex in $\pp(w)$ has all its children already embedded, no further points are embedded into $D$.
    So the hypergraph $\HH_m$ can be realized by bottomless rectangles and horizontal strips.
\end{proof}

\subsection{The Cases with Polychromatic Colorings}\label{subsec:positive}

First, recall the result of \citet{AKV17} that for the range family $\CR_{\rm SQ}$ of all axis-aligned squares, we have $m(k) \leq O(k^{8.75})$.
This already seals the deal for bottomless and topless rectangles.

\begin{thm}\label{thm:bottomless-topless-rectangles}
 For the family $\CR = \CR_{\rm BL} \cup \CR_{\rm TL}$ of all bottomless and topless rectangles, we have $m(k) \leq O(k^{8.75})$ for all $k$. 
\end{thm}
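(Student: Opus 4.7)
The plan is to reduce the statement to the bound of \citet{AKV17} for axis-aligned squares by re-embedding $V$ so that every bottomless or topless hyperedge on $V$ becomes an axis-aligned square hyperedge on the new point set. The key observation is that $\HH(V, \CR_{\rm BL} \cup \CR_{\rm TL}, m)$ depends only on the relative $x$- and $y$-orders of the points of $V$, so we are free to replace $V$ by any order-preserving re-embedding without changing the hypergraph we wish to color. I would exploit this freedom by compressing the vertical spread of the points until it is strictly smaller than the horizontal distance between any two consecutive points.

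Concretely, sort $V = \{p_1, \ldots, p_n\}$ with $p_i = (x_i, y_i)$ by $x$-coordinate and define $V^* = \{p_1^*, \ldots, p_n^*\}$ via $p_i^* := (i, \delta y_i)$, where $\delta > 0$ is small enough that $\delta(y_{\max} - y_{\min}) < 1$ (perturbing slightly, if necessary, to keep $V^*$ in general position with respect to axis-aligned squares). Now consider any bottomless rectangle $R = [a_1, a_2] \times (-\infty, b]$ whose intersection with $V^*$ has at least $m \geq 2$ points. Since the $x$-coordinates of $V^*$ are the integers $1, \ldots, n$, we have $a_2 - a_1 \geq m-1 \geq 1$, so the side length $s := a_2 - a_1$ strictly exceeds the vertical spread of $V^*$. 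A short case distinction on whether $b$ lies above the topmost $y$-coordinate of $V^*$ or inside its $y$-range then shows that the axis-aligned square $[a_1, a_2] \times [b-s, b]$ (or a slight vertical shift of it in the first case) captures exactly the same subset of $V^*$ as $R$. A symmetric argument handles topless rectangles.

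It follows that every hyperedge of $\HH(V, \CR_{\rm BL} \cup \CR_{\rm TL}, m)$ is also a hyperedge of $\HH(V^*, \CR_{\rm SQ}, m)$ on the common abstract vertex set, so any polychromatic $k$-coloring of the latter — which exists for $m = O(k^{8.75})$ by \citet{AKV17} — is automatically a polychromatic $k$-coloring of the former, yielding the desired bound. The main subtlety is calibrating the reduction so that the vertical spread of $V^*$ is dominated by the horizontal width of every rectangle that can realize a hyperedge of size at least $m$; with the $x$-coordinates spaced by $1$ and $\delta$ chosen as above this is automatic once $m \geq 2$, and the case $m = 1$ is vacuous.
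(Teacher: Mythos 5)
Your proposal is correct and follows essentially the same route as the paper: both re-embed $V$ in an order-preserving way so that every bottomless or topless range realizing a hyperedge is wider than the vertical spread it needs to cover (the paper stretches horizontally, you compress vertically onto integer $x$-coordinates), then complete each such range to an axis-aligned square and invoke the $O(k^{8.75})$ bound of \citet{AKV17}. Your version is a slightly more explicit instantiation of the same reduction.
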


\begin{proof}
 Let $V$ be a finite point set and let $m$ be arbitrary. 
 For every bottomless (respectively topless) rectangle capturing a hyperedge of $\HH = \HH(V, \CR, m)$, we introduce a bottom (respectively top) side below the bottommost (respectively above the topmost) point in $V$ so that these rectangles are bounded now. After that we stretch the plane horizontally until the width of every aforementioned rectangle becomes larger than its height and obtain the point set $V'$. This stretching preserves the ordering of $x$- and $y$-coordinates of the points so that the set of hyperedges captured by $\CR$ remains the same. Finally, we pick every (now bounded) bottomless (respectively topless) rectangle capturing a hyperedge of $\HH$ and shift its bottom (respectively top) side down (respectively up) until it becomes a square. 
 Now for every hyperedge in $\HH$, there is an axis-aligned square capturing it and hence, a hyperedge in $\HH' = (V', \CR_{\rm SQ}, m)$. Thus, each polychromatic coloring of $\HH'$ yields a polychromatic coloring of $\HH$ and this concludes the proof.
\end{proof}

For the remaining cases, we utilize so-called shallow hitting sets.
For a positive integer $t$, a subset $X$ of vertices of a hypergraph $\HH$ is a \emph{$t$-shallow hitting set} if every hyperedge of $\HH$ contains at least one and at most $t$ points from $X$.
It is known for example that for $\CR$ being the family of all halfplanes, every range capturing hypergraph $\HH(V,\CR,m)$ admits a $2$-shallow hitting set~\cite{SY12}, which implies that $m(k) \leq 2k-1$ in this case.
In general, we have the following.

\begin{lem}[\citet{KP19}]\label{lem:coloring-from-shallow}{\ \\}
 Suppose that for a shrinkable range family $\CR$, every hypergraph $\HH(V,\CR,m)$ admits a $t$-shallow hitting set.
 Then $m(k) \leq (k-1)t + 1$.
\end{lem}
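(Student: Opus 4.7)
The plan is to induct on $k$. The base case $k = 1$ is immediate since $(k-1)t+1 = 1$ and any coloring with the single available color is polychromatic. For the inductive step, assume that the statement holds for $k-1$: every hypergraph $\HH(V', \CR, m')$ with $m' = (k-2)t + 1$ admits a polychromatic $(k-1)$-coloring. Set $m = (k-1)t+1$.

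Given $V \subset \RR^2$, consider $\HH = \HH(V, \CR, m)$. By assumption, $\HH$ has a $t$-shallow hitting set $X \subseteq V$, so every hyperedge $E$ of $\HH$ satisfies $1 \leq |E \cap X| \leq t$. I would color every vertex of $X$ with color $k$, then color the remaining vertices $V \setminus X$ by applying the inductive hypothesis to $\HH(V \setminus X, \CR, m')$ to obtain a polychromatic $(k-1)$-coloring in colors $1, \ldots, k-1$, which together with the $X$-coloring yields a full $k$-coloring of $V$.

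It remains to verify that every hyperedge $E$ of $\HH$ receives all $k$ colors. Fix $E$ with associated range $R \in \CR$, so $E = V \cap R$. Color $k$ is present in $E$ since $|E \cap X| \geq 1$. Moreover,
\[
  |E \setminus X| \;=\; |(V \setminus X) \cap R| \;\geq\; m - t \;=\; (k-2)t + 1 \;=\; m'.
\]
Because $\CR$ is shrinkable, applied to the ground set $V \setminus X$, we may iteratively shrink $(V \setminus X) \cap R$ down to a subset of size exactly $m'$ that is captured by some range $R' \in \CR$. This subset is a hyperedge of $\HH(V \setminus X, \CR, m')$ contained in $E \setminus X$, and by the inductive coloring it already contains all of the colors $1, \ldots, k-1$. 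Hence so does $E$, which completes the induction and gives $m(k) \leq (k-1)t+1$.

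The main point requiring care is the use of shrinkability in the final step: one must check that the definition applies on the restricted ground set $V \setminus X$ (which it does, since shrinkability is a property of $\CR$ itself, applied to an arbitrary finite point set) and hence that the shrunk set $(V \setminus X) \cap R'$ is genuinely a hyperedge of the smaller hypergraph to which induction can be applied. Once this is in place, the counting $m - t = m'$ does the rest and no further technical ingredients are needed.
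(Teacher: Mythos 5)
Your proof is correct and follows essentially the same route as the paper: induction on $k$, peeling off a $t$-shallow hitting set $X$ as the $k$-th color class, and using shrinkability on the reduced ground set $V \setminus X$ to apply the inductive hypothesis at uniformity $m - t = (k-2)t+1$. The extra care you take in spelling out that shrinkability applies on the restricted ground set is the same point the paper uses implicitly.
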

\begin{proof}
 Doing induction on $k$, we first observe that the claim holds for $k=1$, as in this case $(k-1)t + 1 = 1$.
 For $k \geq 2$ let $m = (k-1)t + 1$ and consider a $t$-shallow hitting set $X$ of $\HH(V,\CR,m)$.
 Then every hyperedge contains at least $m - t$ points of $V - X$.
 Since $\CR$ is shrinkable, for every hyperedge $E$ of $\HH(V, \CR, m)$, there is a hyperedge $E'$ of $\HH(V - X, \CR, m - t)$ with $E' \subseteq E$. So if $E'$ is polychromatic in some coloring, then $E$ is as well. By induction, $\HH(V-X,\CR,m-t = (k-2)t + 1)$ admits a polychromatic coloring in colors $1,\ldots,k-1$. Taking $X$ as the $k$-th color, gives a polychromatic $k$-coloring of $\HH(V,\CR,m)$.
\end{proof}

\begin{rem}
 \cref{lem:coloring-from-shallow} states that if $t$-shallow hitting sets exist (for a global constant $t$), then $m(k) = O(k)$.
 However, it is not clear whether the converse is also true, for example when $\CR$ is the family of all bottomless rectangles. 
 \citet{KP19} construct for this family range capturing hypergraphs without shallow hitting sets, but their constructed hypergraphs are not uniform.
 In fact, it follows from the proof of \cref{cor:aligned-strips} that axis-aligned strips do in fact admit $3$-shallow hitting sets, since every hyperedge of $\HH(V,\CR_{\rm HS} \cup \CR_{\rm VS})$ of size $2k-1$ or $2k$ is hit by at most three of the hitting $k$-cliques, and thus contains color $1$ of the resulting $k$-coloring at least once and at most three times. 
 To the best of our knowledge, it is open whether all $\HH(V,\CR,m)$ admit shallow hitting sets for the bottomless rectangles $\CR = \CR_{\rm BL}$.
\end{rem}

Recall that for the family $\CR_{\rm NW}$ of all north-west quadrants we have $m(k)=k$, and observe that in such a polychromatic coloring, every color class is a $1$-shallow hitting set.
Besides north-west quadrants, we want to consider other range families.
Thus, we are interested in $t$-shallow hitting sets for $\CR_{\rm NW}$ that additionally do not hit other ranges, such as axis-parallel strips or other quadrants, too often.

\begin{lem}\label{lem:2-shallow-quadrants}
 For the family $\CR_{\rm NW}$ 
 of all north-west 
 quadrants, every hypergraph $\HH(V,\CR_{\rm NW},m)$ 
 admits a $2$-shallow hitting set $X$ such that the points $x_1,\ldots,x_n$ in $X$ have decreasing 
 $x$-coordinates and decreasing 
 $y$-coordinates, and
 \begin{enumerate}[(i)]
     \item $x_1$ is the leftmost 
     point of the $m$ topmost 
     points in $V$,\label{item:x1-leftmost-top}
    
    \item the hyperedge consisting of the topmost 
    $m$ vertices of $V$ is hit exactly once, \label{item:topmost-hyperedge}
    
     \item for any two consecutive points $x_j,x_{j+1}$ in $X$, the bottomless 
     rectangle $B_j$ with top-right 
     corner $x_j$ and $x_{j+1}$ on the left 
     side satisfies $|B_j \cap V| \geq m+1$, and\label{item:2-rectangle}
     
     \item for any three consecutive points $x_j,x_{j+1},x_{j+2}$ in $X$, the axis-aligned rectangle $R_j$ with top-right 
     corner $x_j$ and bottom-left 
     corner $x_{j+2}$ satisfies $|R_j \cap V| \geq m+2$.\label{item:3-rectangle}
 \end{enumerate}
\end{lem}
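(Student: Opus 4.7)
The plan is to build $X$ by a top-down greedy scan. Set $V^{(0)} := V$ and let $x_1$ be the point with the smallest $x$-coordinate among the $m$ points of largest $y$-coordinate in $V^{(0)}$. Having chosen $x_j$, set
\[V^{(j)} := \{v \in V : x(v) < x(x_j),\ y(v) < y(x_j)\},\]
and, as long as $|V^{(j)}| \geq m$, let $x_{j+1}$ be the leftmost point among the top $m$ points (by $y$-coordinate) of $V^{(j)}$. Since $x_{j+1} \in V^{(j)}$, both coordinate sequences are strictly decreasing, giving the required staircase, and item~(i) is immediate from the definition of $x_1$.

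The key auxiliary claim I would prove by induction on $j$ is the structural identity $V^{(j)} = \{v \in V : x(v) < x(x_j)\}$: any $v \in V$ with $x(v) < x(x_j) \leq x(x_{j-1})$ lies in $V^{(j-1)}$ and cannot belong to its top $m$ (otherwise, since $x_j$ is the leftmost such point, we would have $x(v) \geq x(x_j)$), so $y(v) < y(x_j)$. As a consequence, every $x_j$ with $j \geq 2$ has $x(x_j) < x(x_1)$ and therefore lies outside the top $m$ of $V$, giving item~(ii).

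For item~(iii), each of the $m$ top points of $V^{(j)}$ has $x$-coordinate in $[x(x_{j+1}), x(x_j))$ and $y$-coordinate less than $y(x_j)$, hence lies in $B_j$; adjoining $x_j$ itself yields $|B_j \cap V| \geq m+1$. For item~(iv), all $m$ top points of $V^{(j+1)}$ lie in $[x(x_{j+2}), x(x_{j+1})) \times [y(x_{j+2}), y(x_{j+1}))$, hence in $R_j$; together with $x_j$ and $x_{j+1}$ (neither of which is in $V^{(j+1)}$) they give $|R_j \cap V| \geq m+2$.

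It remains to check that $X$ is a hitting set and that it is $2$-shallow. For hitting, I would argue by induction: given an $m$-hyperedge $E = V \cap Q$ with $Q = (-\infty, a] \times [b, \infty)$, if $x_1 \notin E$ then $y(x_1) < b$ is impossible (there are only $m-1$ points of $V$ with $y > y(x_1)$, since $x_1$ is among the top $m$), so $x(x_1) > a$, whence $E \subseteq V^{(1)}$ and the argument recurses on $V^{(1)}$. For $2$-shallow, three staircase points $x_{j_1}, x_{j_2}, x_{j_3}$ with $j_1 < j_2 < j_3$ lying in $E$ would force the quadrant of $E$ to contain $R_{j_1}$ (using $y(x_{j_3}) \leq y(x_{j_1+2})$, which follows from $j_3 \geq j_1+2$), so item~(iv) would give $|E| \geq m+2$, contradicting $|E| = m$. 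The main obstacle is isolating a single greedy rule that simultaneously enforces all four conditions; once the structural identity for $V^{(j)}$ is in hand, each item reduces to a short geometric check.
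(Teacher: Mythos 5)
Your construction is essentially the paper's greedy procedure in disguise (the paper scans the capturing quadrants by decreasing apex and adds the leftmost point of the first quadrant not yet hit; your top-$m$-of-$V^{(j)}$ sets are exactly such quadrant hyperedges), and your structural identity $V^{(j)}=\{v: x(v)<x(x_j)\}$ is a clean way to organize the verification. Items (i)--(iii), the hitting argument, and the derivation of $2$-shallowness from item (iv) are all correct.

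However, your proof of item (iv) contains a false step: you claim that all $m$ top points of $V^{(j+1)}$ lie in $[x(x_{j+2}),x(x_{j+1}))\times[y(x_{j+2}),y(x_{j+1}))$. The $y$-lower bound fails in general, because $x_{j+2}$ is the \emph{leftmost} of the top $m$ points of $V^{(j+1)}$, not the lowest; e.g.\ if $x_{j+2}$ happens to be the single topmost point of $V^{(j+1)}$, all other top-$m$ points lie strictly below it and hence outside $R_j$. Since your $2$-shallowness proof rests on (iv), the gap propagates there. The fix is to count the top $m$ points of $V^{(j)}$ instead (together with $x_j$ and $x_{j+2}$, which are not among them): each such point $p$ has $x(x_{j+2})<x(x_{j+1})\leq x(p)<x(x_j)$ and $y(p)<y(x_j)$ as in your item (iii), and moreover $y(p)>y(x_{j+2})$ because $x_{j+2}$ belongs to $V^{(j)}$ (by your structural identity, since $x(x_{j+2})<x(x_j)$) but is not among its top $m$ (all of which have $x$-coordinate at least $x(x_{j+1})>x(x_{j+2})$). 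This yields $|R_j\cap V|\geq m+2$ and matches the paper's argument, which uses that every point of the quadrant defining $x_{j+1}$ lies above $x_{j+2}$.
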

\begin{proof}
 For each hyperedge of $\HH(V,\CR_{\rm NW},m)$ consider a fixed north-west quadrant capturing these $m$ points of $V$.
 These quadrants can be indexed $Q_1,\ldots,Q_\alpha$ along their apices with decreasing $x$-coordinates (and hence also $y$-coordinates).
 I.e., $Q_1$ contains the topmost $m$ points of $V$, while $Q_\alpha$ contains the leftmost $m$ points of $V$.
 See Figure~\ref{fig:all-quadrants} for an illustrative example.
 
 \begin{figure}[t]
    \centering
    \includegraphics{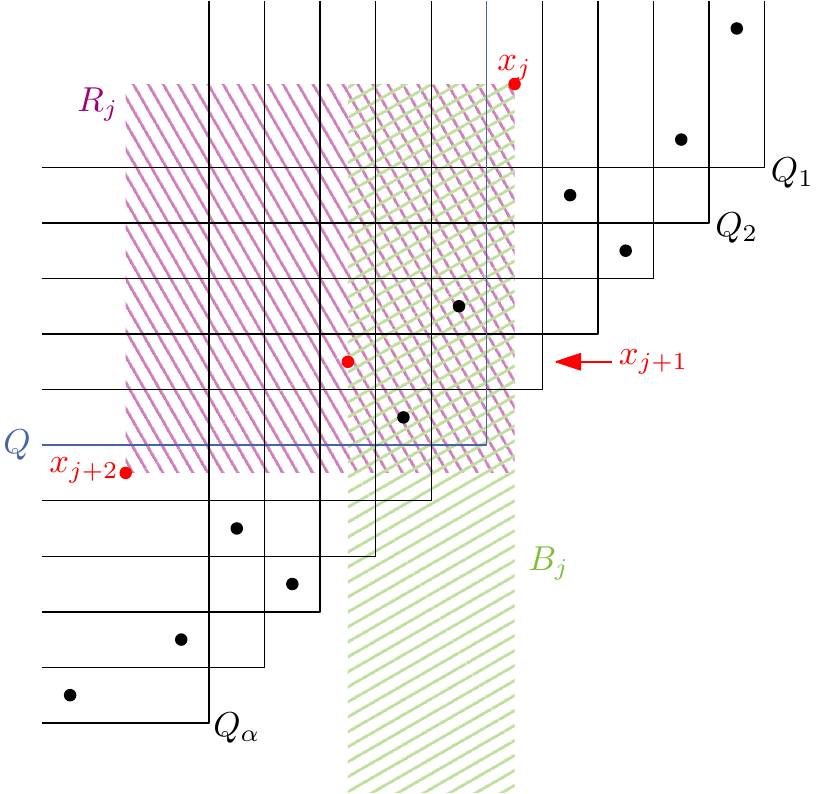}
    \caption{
        Sketch for the proof of Lemma~\ref{lem:2-shallow-quadrants} for $m = 3$. The vertices in $X$ are red.
    }
    \label{fig:all-quadrants}
\end{figure}
 
 Starting with $X = \emptyset$, we go through the north-west quadrants from $Q_1$ to $Q_\alpha$, and whenever $Q_i$ does not contain any point of $X$, we add the leftmost point of $Q_i \cap V$ to $X$.
 Label the points in $X$ by $x_1,\ldots,x_n$ in the order in which they were added to $X$.
 Along this order, the points have decreasing $x$-coordinates and decreasing $y$-coordinates.
 Clearly, $X$ is a hitting set of $\HH(V,\CR_{\rm NW},m)$ and satisfies \cref{item:x1-leftmost-top}.
 
 Since $x_1$ is the leftmost point of $Q_1$, $x_j$ does not belong to $Q_1$ for every $j \in \{2, \dots, n\}$. 
 Since $Q_1$ contains exactly the topmost $m$ vertices, 
 the corresponding hyperedge is hit exactly once. This proves \cref{item:topmost-hyperedge}.
 
 For any two consecutive points $x_j,x_{j+1}$ in $X$, consider the bottomless rectangle $B_j$ with top-right corner $x_j$ and $x_{j+1}$ on the left side.
 Then $|B_j \cap V| \geq m+1$ as $B_j$ contains $x_j$ and all points of the north-west quadrant $Q$ for which we added $x_{j+1}$ to $X$.
 This proves \cref{item:2-rectangle}.
 
 Moreover, every point in $Q \cap V$ lies above $x_{j+2}$ (if it exists), as $x_{j+1}$ is leftmost in $Q$.
 Thus, the axis-aligned rectangle $R_j$ with top-right corner $x_j$ and bottom-left corner $x_{j+2}$ contains $x_j$, all the $m$ points in $Q \cap V$, and $x_{j+2}$.
 This proves \cref{item:3-rectangle} and also implies that $X$ is $2$-shallow.
\end{proof}

Let $E_t(V, m)$ (respectively $E_b(V, m)$) denote the set of $m$ topmost (respectively bottommost) points in $V$. For symmetry reasons, statements analogous to the above lemma hold for other types of quadrants so we obtain the following corollary:
\begin{cor}\label{cor:shallow-hitting-sets}
For a point set $V$ and $m \in \NN$, a hypergraph 
	\[
		\HH(V, \CR_{\rm{NW}}, m) \text{ } / \text{ } \HH(V, \CR_{\rm{NE}}, m) \text{ } / \text{ } \HH(V, \CR_{\rm{SW}}, m) \text{ } / \text{ } \HH(V, \CR_{\rm{SE}}, m)
	\] 
	admits a hitting set 
	\[
		S_{\rm{NW}} \text{ } / \text{ } S_{\rm{NE}} \text{ } / \text{ } S_{\rm{SW}} \text{ } / \text{ } S_{\rm{SE}} \text{ }
	\]
	such that:
	\begin{enumerate}
		\item It satisfies the properties of (the symmetrical version of) \cref{lem:2-shallow-quadrants}.	
		\item Every hyperedge $E \neq E_t(V, m)$ of $\HH(V, \CR_{\rm{NW}}, m)$ is hit by $S_{\rm{NW}}$~/ $S_{\rm{NE}}$~/ $S_{\rm{SW}}$~/ $S_{\rm{SE}}$ at most twice~/ not once~/ once~/ once. \label{item:nw}
		\item Every hyperedge $E \neq E_t(V, m)$ of $\HH(V, \CR_{\rm{NE}}, m)$ is hit by $S_{\rm{NW}}$~/ $S_{\rm{NE}}$~/ $S_{\rm{SW}}$~/ $S_{\rm{SE}}$ at most not once~/ twice~/ once~/ once.\label{item:ne}
		\item Every hyperedge $E \neq E_b(V, m)$ of $\HH(V, \CR_{\rm{SW}}, m)$ is hit by $S_{\rm{NW}}$~/ $S_{\rm{NE}}$~/ $S_{\rm{SW}}$~/ $S_{\rm{SE}}$ at most once~/ once~/ twice~/ not once.\label{item:sw}
		\item Every hyperedge $E \neq E_b(V, m)$ of $\HH(V, \CR_{\rm{SE}}, m)$ is hit by $S_{\rm{NW}}$~/ $S_{\rm{NE}}$~/ $S_{\rm{SW}}$~/ $S_{\rm{SE}}$ at most once~/ once~/ not once~/ twice.\label{item:se}
		\item The set $E_t(V, m)$ is hit by $S_{\rm{NW}}$~/ $S_{\rm{NE}}$~/ $S_{\rm{SW}}$~/ $S_{\rm{SE}}$ at most once / once / once / once. \label{item:topmost}
		\item The set $E_b(V, m)$ is hit by $S_{\rm{NW}}$~/ $S_{\rm{NE}}$~/ $S_{\rm{SW}}$~/ $S_{\rm{SE}}$ at most once / once / once / once. \label{item:bottommost}
	\end{enumerate}
\end{cor}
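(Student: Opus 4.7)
The plan is to construct each of the four hitting sets by applying \cref{lem:2-shallow-quadrants} to $V$ (for $S_{\rm NW}$) or to a reflected copy of $V$ across the $x$-axis, the $y$-axis, or both (for $S_{\rm SW}$, $S_{\rm NE}$, and $S_{\rm SE}$, respectively). Property~1 and, via $2$-shallowness, all ``at most twice'' entries on the diagonal of items~2--5 follow immediately.

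For the four ``not once'' off-diagonal entries, I claim that if $E$ is a NW hyperedge captured by a quadrant $Q=\{(x,y):x\le a,y\ge b\}$ and $x_j^{\rm NE}\in Q$ for some $j$, then in fact $E=E_t(V,m)$. Indeed, since the points of $S_{\rm NE}$ have increasing $x$- and decreasing $y$-coordinates, $x(x_1^{\rm NE})\le x(x_j^{\rm NE})\le a$ and $y(x_1^{\rm NE})\ge y(x_j^{\rm NE})\ge b$, so $x_1^{\rm NE}\in Q$ as well. The NE version of property~(i) makes $x_1^{\rm NE}$ the rightmost point of $E_t(V,m)$, hence every $p\in E_t(V,m)$ satisfies $x(p)\le a$; and if some such $p$ had $y(p)<b$, then fewer than $m$ points of $V$ would have $y$-coordinate at least $b$, contradicting $|Q\cap V|=m$. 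Thus $E_t(V,m)\subseteq Q$ and $E=E_t(V,m)$. Contrapositively, any $E\neq E_t(V,m)$ contains no point of $S_{\rm NE}$; the three other ``not once'' entries follow by symmetry.

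For the ``at most once'' off-diagonal entries, suppose that two points $x_j^{\rm SW},x_{j'}^{\rm SW}$ of the staircase $S_{\rm SW}$ (with decreasing $x$-coordinates and increasing $y$-coordinates) lie in a NW quadrant $Q$ with $j<j'$. The monotonicity of the staircase shows that every intermediate point also lies in $Q$, so I may assume $j'=j+1$. Then the topless rectangle which is the symmetric analogue of the one from property~(iii) -- namely, the rectangle with bottom-right corner $x_j^{\rm SW}$ and $x_{j+1}^{\rm SW}$ on its left side -- is entirely contained in $Q$, yet by~(iii) it contains at least $m+1$ points of $V$, contradicting $|Q\cap V|=m$. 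The other ``at most once'' entries of items~2--5 are obtained by the same template after applying the appropriate reflection.

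Finally, items~6 and~7 require the ``at most once'' bound for the two extremal hyperedges. For $E_t(V,m)$, property~(ii) applied to the NW and NE lemmas gives exactly one hit by $S_{\rm NW}$ and by $S_{\rm NE}$, while the previous ``at most once'' argument, applied with $Q$ being any NW or NE quadrant realizing $E_t(V,m)$, handles $S_{\rm SW}$ and $S_{\rm SE}$. Item~7 then follows by vertical symmetry. The main obstacle is tracking the four different staircase orientations through the three reflections without confusing $x$- and $y$-monotonicities; once the two template arguments (``not once'' via~(i) and~(ii); ``at most once'' via staircase monotonicity and~(iii)) are pinned down, every remaining entry of the table reduces to one of them by symmetry.
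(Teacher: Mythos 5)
Your proof is correct and follows the same route the paper intends: the corollary is stated as following ``for symmetry reasons'' from \cref{lem:2-shallow-quadrants}, and your two template arguments (the ``not once'' entries via the reflected versions of properties~(i) and~(ii) together with the extremality of $E_t$ and $E_b$, and the ``at most once'' entries via staircase monotonicity plus the $\geq m+1$ points guaranteed by property~(iii)) are exactly the verifications needed to justify each entry of the table. No gaps.
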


Intuitively speaking, \cref{lem:2-shallow-quadrants} allows us to color some points in $V$, in such a way that every north-west quadrant already contains all colors, while other ranges, such as bottomless rectangles or diagonal strips, have most of their points still uncolored.

Now we provide a framework which can then be applied to color various range families. 

\begin{lem}\label{lem:generic-lemma}
    Let $\CR_1, \CR_2$ be shrinkable range families, $f: \NN \to \NN$ be a function, and $s, t \in \NN$ be such that:
    \begin{enumerate}
        \item For every $k \in \NN$, it holds that $m_{\CR_2}(k) \leq f(k)$.
        \item For every point set $V$ and every $m \in \NN$, the hypergraph $\HH(V, \CR_1, m)$ admits a $t$-shallow hitting set $S \subseteq V$ such that every hyperedge of $\HH(V, \CR_2, m)$ it hit at most $s$ times by $S$. \label{item:nice-property}
    \end{enumerate}
    Then for every $k \in \NN$, we have
    \[
        m_{\CR_1 \cup \CR_2}(k) \leq f(k) + k \max(s, t).
    \]
\end{lem}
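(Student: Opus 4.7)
The plan is to build the $k$-coloring by iterating the second assumption once per color on a shrinking vertex set, leaving the last color class to be handled by the polychromatic coloring for $\CR_2$ guaranteed by the first assumption. Set $u := \max(s, t)$, $m := f(k) + k u$, and $m_i := m - (i-1) u$ for $i = 1, \ldots, k+1$; note $m_1 = m$ and $m_{k+1} = f(k)$. Put $V_0 := V$, and for $i = 1, \ldots, k$ apply the second assumption to the pair $(V_{i-1}, m_i)$ to obtain $S_i \subseteq V_{i-1}$ that is a $t$-shallow hitting set of $\HH(V_{i-1}, \CR_1, m_i)$ and hits every hyperedge of $\HH(V_{i-1}, \CR_2, m_i)$ at most $s$ times; set $V_i := V_{i-1} \setminus S_i$. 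By the first assumption, fix a polychromatic $k$-coloring $c_0$ of $\HH(V_k, \CR_2, f(k))$. The final coloring $c \colon V \to [k]$ is defined by $c(v) := i$ if $v \in S_i$ and $c(v) := c_0(v)$ if $v \in V_k$.

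The heart of the argument will be the invariant that for every hyperedge $E$ of $\HH(V, \CR_1 \cup \CR_2, m)$ with a fixed realizing range $R \in \CR_\ell$ (where $\ell \in \{1, 2\}$), one has $|V_i \cap R| \geq m_{i+1}$ for all $i \in \{0, 1, \ldots, k\}$. I would prove this by induction on $i$; the base $|V_0 \cap R| = m = m_1$ is immediate. For the step, setting $p := |V_{i-1} \cap R| \geq m_i$, shrinkability of $\CR_\ell$ yields a $\CR_\ell$-$m_i$-hyperedge $F \subseteq V_{i-1} \cap R$ of $V_{i-1}$, and the defining property of $S_i$ gives $|F \cap S_i| \leq u$ (it is $t$ if $\ell = 1$ and $s$ if $\ell = 2$). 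The $p - m_i$ remaining points in $(V_{i-1} \cap R) \setminus F$ each contribute at most one intersection with $S_i$, so $|V_{i-1} \cap R \cap S_i| \leq u + (p - m_i)$; subtracting yields $|V_i \cap R| \geq p - u - (p - m_i) = m_i - u = m_{i+1}$, as required.

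With the invariant in hand the conclusions are direct. For a $\CR_1$-hyperedge $E$ of $V$, applying the invariant at each $i \in [k]$ provides $|V_{i-1} \cap R| \geq m_i$, so by shrinkability $V_{i-1} \cap R \subseteq E$ contains a $\CR_1$-$m_i$-hyperedge that $S_i$ must hit, placing color $i$ in $E$; hence $E$ receives all $k$ colors. For a $\CR_2$-hyperedge $E$ of $V$, the invariant at $i = k$ gives $|V_k \cap R| \geq m_{k+1} = f(k)$, so $V_k \cap R$ contains a $\CR_2$-$f(k)$-hyperedge (again by shrinkability), which is polychromatic under $c_0$; therefore $E$ also receives all $k$ colors. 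The main obstacle I anticipate is a subtle mismatch in the iteration: the hypothesis on $S_i$ only controls its intersection with $\CR_\ell$-hyperedges of size exactly $m_i$ in $V_{i-1}$, while the restrictions $V_{i-1} \cap R$ that must be analyzed can have size $p$ strictly greater than $m_i$; the saving observation is that the $p - m_i$ ``extra'' points beyond the shrunk $F$ can at worst all lie in $S_i$, but they also add exactly $p - m_i$ to $|V_{i-1} \cap R|$ in the first place, so these two effects cancel and the invariant propagates without any loss of slack.
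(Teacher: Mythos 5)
Your proof is correct and follows essentially the same route as the paper: iteratively extract $k$ shallow hitting sets while decrementing the uniformity by $\max(s,t)$, maintain the invariant $|V_i \cap E| \geq m_{i+1}$ via shrinkability, and finish with a polychromatic $k$-coloring of $\HH(V_k,\CR_2,f(k))$. The ``mismatch'' you flag at the end is resolved exactly as in the paper's proof, namely by first shrinking $V_{i-1}\cap R$ to an $m_i$-hyperedge and only then applying the shallowness bound.
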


\begin{proof}
    Let $V$ be a point set and let $k \in \NN$. 
    Let $m = f(k) + k \max(s, t)$.
    We construct a polychromatic coloring of $\HH(V, \CR_1 \cup \CR_2, m)$ with $k$ colors in two steps. First, similarly to \cref{lem:coloring-from-shallow} we iteratively identify $k$ $t$-shallow hitting sets of $\HH(V,\CR_1,m)$ to obtain a $k$-coloring of a subset $V'$ of $V$ that is already polychromatic for $\HH(V,\CR_1,m)$. Second, we show that every hyperedge captured by $\CR_2$ contains at least $f(k)$ (uncolored) points in $V - V'$ and hence, there is a $k$-coloring of $V - V'$ that is polychromatic for $\HH(V,\CR_2,m)$ too. Now we formalize this procedure.
    
    We start with $V_1 = V$, $m_1 = m$ and for $i \in [k]$ repeat the following. Let $S_i$ be a $t$-shallow hitting set of $\HH(V_i, \CR_1, m_i)$ as given by \cref{item:nice-property}. We set $V_{i+1} = V_i \setminus S_i$ and $m_{i+1} = m_i - \max(s, t)$. Note that $m_{i+1} = m - i\max(s, t)$ for all $i \in [k]$ and in particular $m_{k+1} = m -k \max(s,t) = f(k)$.
    
    We claim that for every $i \in [k]$, every hyperedge $E$ of $\HH(V, \CR_1, m)$, and every hyperedge $E'$ of $\HH(V, \CR_2, m)$ the following holds:
    \begin{itemize}
        \item $|S_i \cap E| \geq 1$,
        \item $|V_{i+1} \cap E| \geq m_{i+1}$,
        \item $|V_{i+1} \cap E'| \geq m_{i+1}$.
    \end{itemize}
    We prove the statement by induction on $i$. For $i = 1$ we have $V_1 = V$, $m_1 = m$. Since $S_1$ satisfies \cref{item:nice-property}, $E$ is hit by $S_1$ and both $E$ and $E'$ are hit at most $\max(s, t)$ times by $S_1$, so the claim holds. Now suppose it holds for some $i \geq 1$. Then we know that 
    \[
        |V_{i+1} \cap E| \geq m_{i+1}, \quad |V_{i+1} \cap E'| \geq m_{i+1}.
    \]
    Since $\CR_1$ and $\CR_2$ are shrinkable range families, there exist hyperedges $I \subseteq E$ in $\HH(V_{i+1}, \CR_1, m_{i+1})$ and $I' \subseteq E'$ in $\HH(V_{i+1}, \CR_2, m_{i+1})$.
    For the $t$-shallow hitting set $S_{i+1}$ of $\HH(V_{i+1}, \CR_1, m_{i+1})$ we have
    \[
        1 \leq |S_{i+1} \cap I| \leq t, \quad |S_{i+1} \cap I'| \leq s.
    \]
    First, this implies $|S_{i+1} \cap E| \geq |S_{i+1} \cap I| \geq 1$. Second, we have:
    \[
        |V_{i+2} \cap E| \geq |V_{i+2} \cap I| = |V_{i+1} \cap I| - |S_{i+1} \cap I| \geq m_{i+1} - t \geq m_{i+2}.
    \]
    Similarly, we have:
    \[
        |V_{i+2} \cap E'| \geq |V_{i+2} \cap I'| = |V_{i+1} \cap I'| - |S_{i+1} \cap I'| \geq m_{i+1} - s \geq m_{i+2}.
    \]
    Therefore, the claimed properties hold for $i+1$ too and by induction they hold for every $i \in [k]$.
    First of all, this implies that each of $S_1, \dots, S_k$ is a hitting set of $\HH(V, \CR_1, m)$, where by construction, $S_1, \dots, S_k$ are pairwise disjoint.
    Now we color all points in $S_i$ with color $i$ for every $i \in [k]$. This is a $k$-coloring of $V' = S_1 \cup \cdots \cup S_k \subseteq V$ that is polychromatic for $\HH(V, \CR_1, m)$. 
    For the remaining points $V_{k+1} = V - V'$ we take a polychromatic $k$-coloring of $\HH(V_{k+1},\CR_2,m_{k+1})$, which exists as $m_{k+1} = f(k)$.
    Then every hyperedge $E'$ of $\HH(V, \CR_2, m)$ is polychromatic since $|V_{k+1} \cap E'| \geq m_{k+1}$ and thus, as $\CR_2$ is shrinkable, there exists a hyperedge $I' \subseteq E'$ in $\HH(V_{k+1},\CR_2,m_{k+1})$. 
    Altogether, the arising coloring is a polychromatic coloring of $\HH(V, \CR_1 \cup \CR_2, m)$ and this concludes the proof.
\end{proof}

With \cref{lem:generic-lemma} in place, we can prove the upper bounds for several range families:
\begin{thm}\label{thm:upper-bounds}
    \begin{enumerate}
        \item For the range family $\CR_{\rm{BL}} \cup \CR_{\rm{NW}} \cup \CR_{\rm{NE}}$, we have $m(k) \leq 5k-2$ for all $k$.
        \item For the range family $\CR_{\rm{HS}} \cup \CR_{\rm{VS}} \cup \CR_{\rm{NW}} \cup \CR_{\rm{NE}} \cup \CR_{\rm{SW}} \cup \CR_{\rm{SE}}$, we have $m(k) \leq 10k-1$ for all $k$.
        \item For the range family $\CR_{\rm{HS}} \cup \CR_{\rm{VS}} \cup \CR_{\rm{DS}} \cup \CR_{\rm{NW}} \cup \CR_{\rm{SE}}$, we have $m(k) \leq \lceil 4k \ln k + k \ln 3 \rceil + 4k$ for all $k$.
    \end{enumerate}
\end{thm}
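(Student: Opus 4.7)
The plan is to apply \cref{lem:generic-lemma} to each of the three parts, each time taking $\CR_1$ to be a union of quadrant families for which \cref{cor:shallow-hitting-sets} supplies staircase-shaped shallow hitting sets, and letting $\CR_2$ collect the remaining ranges. The hitting set $S$ is the union of the relevant staircases among $S_{\rm NW}, S_{\rm NE}, S_{\rm SW}, S_{\rm SE}$; its shallowness $t$ on $\CR_1$ is obtained by summing the contributions listed in \cref{cor:shallow-hitting-sets}, while its shallowness $s$ on $\CR_2$ is bounded geometrically, by showing that a range in $\CR_2$ capturing exactly $m$ points cannot contain too many consecutive staircase points, because otherwise one of the rectangles $B_j$ or $R_j$ from \cref{lem:2-shallow-quadrants} sits inside the range and forces strictly more than $m$ captured points.

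For part~(1), take $\CR_1 = \CR_{\rm NW}\cup\CR_{\rm NE}$, $\CR_2 = \CR_{\rm BL}$, $f(k)=3k-2$ from~\cite{ACCCHHKLLMRU13}, and $S=S_{\rm NW}\cup S_{\rm NE}$. The entries of \cref{cor:shallow-hitting-sets} sum to at most $2$ on every hyperedge of $\CR_1$, so $t=2$. For $s$, the key observation is that if a bottomless rectangle $B$ with $|B\cap V|=m$ contained two consecutive points $x_j,x_{j+1}$ of $S_{\rm NW}$, then the bottomless rectangle $B_j$ from property~(iii) of \cref{lem:2-shallow-quadrants} would be a subset of $B$ (both are bottomless, and the top, right, and left sides of $B$ dominate those of $B_j$), forcing $|B\cap V|\geq m+1$, a contradiction. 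The monotone staircase rules out non-consecutive pairs as well, so $B$ contains at most one point of $S_{\rm NW}$ and by symmetry at most one of $S_{\rm NE}$; hence $s=2$. \cref{lem:generic-lemma} then gives $m(k)\leq (3k-2)+2k=5k-2$.

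For part~(2), take $\CR_1=\CR_{\rm NW}\cup\CR_{\rm NE}\cup\CR_{\rm SW}\cup\CR_{\rm SE}$, $\CR_2=\CR_{\rm HS}\cup\CR_{\rm VS}$ with $f(k)=2k-1$ from \cref{cor:aligned-strips}, and $S$ the union of all four staircases. Summing the entries in \cref{cor:shallow-hitting-sets} gives $t=4$. For $s$, suppose a horizontal strip $H$ with $|H\cap V|=m$ contains three consecutive $S_{\rm NW}$-points $x_j,x_{j+1},x_{j+2}$. The rectangle $R_j$ from property~(iv) of \cref{lem:2-shallow-quadrants} then has $y$-range $[y(x_{j+2}),y(x_j)]$ inside the $y$-range of $H$ (since $H$ contains $x_j$ and $x_{j+2}$), and $H$ is unbounded in~$x$; hence $R_j\subseteq H$ and $|H\cap V|\geq m+2$, a contradiction. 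So each of the four staircases contributes at most~$2$ points to $H$ and $s\leq 8$; vertical strips are handled symmetrically. This yields $m(k)\leq (2k-1)+8k=10k-1$.

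For part~(3), take $\CR_1=\CR_{\rm NW}\cup\CR_{\rm SE}$, $\CR_2=\CR_{\rm HS}\cup\CR_{\rm VS}\cup\CR_{\rm DS}$, and $S=S_{\rm NW}\cup S_{\rm SE}$. On each of these two staircases, both coordinates are monotone in the same direction (decreasing for $S_{\rm NW}$, increasing for $S_{\rm SE}$), so $x+y$ is monotone as well; consequently the rectangle $R_j$ from property~(iv) has its extremal $x+y$-values attained at the two $V$-vertices $x_j$ and $x_{j+2}$, and is therefore contained in any diagonal strip that already contains $x_j$ and $x_{j+2}$. The $R_j$-argument of part~(2) therefore carries over verbatim to each of $\CR_{\rm HS}$, $\CR_{\rm VS}$, and $\CR_{\rm DS}$, giving at most two staircase points per hyperedge of $\CR_2$ and $s\leq 4$; the entries of \cref{cor:shallow-hitting-sets} sum to $t=3$. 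The main obstacle is that I still need a bound $f(k)=m(k)$ for $\CR_2$ itself, and I plan to prove $f(k)\leq \lceil 4k\ln k+k\ln 3\rceil$ by the Lov\'asz Local Lemma applied to a uniformly random $k$-coloring of $V$: the failure probability of a fixed $m$-uniform hyperedge is at most $k(1-1/k)^m\leq k\,e^{-m/k}$, and each hyperedge shares a vertex with at most $3m^2$ others (each point lies in at most $m$ hyperedges per direction and there are three directions), so choosing $m=\lceil 4k\ln k+k\ln 3\rceil$ makes the failure probability at most $1/(3k^3)$ and satisfies the LLL condition $e\,p\,(d+1)\leq 1$ for all $k$ large enough; the finitely many small values of $k$ where this fails can be absorbed either into the constants or handled by an ad hoc inductive argument on subfamilies of $\CR_2$. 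Plugging this into \cref{lem:generic-lemma} yields $m(k)\leq \lceil 4k\ln k+k\ln 3\rceil+4k$.
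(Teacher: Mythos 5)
Your proposal follows the same route as the paper: all three parts are instances of \cref{lem:generic-lemma} with exactly the same choices of $\CR_1$, $\CR_2$, $f$, $s$ and $t$, and the resulting arithmetic is identical. The difference is that where the paper simply reads off the shallowness values from \cref{cor:shallow-hitting-sets}, you re-derive them geometrically from properties \cref{item:2-rectangle} and \cref{item:3-rectangle} of \cref{lem:2-shallow-quadrants} (a bottomless rectangle capturing exactly $m$ points cannot contain two consecutive staircase points because it would contain $B_j$; a strip capturing exactly $m$ points cannot contain three consecutive staircase points because it would contain $R_j$, with the $x+y$-monotonicity observation handling diagonal strips). These arguments are correct and are in fact the substance behind the corollary, so this part of your write-up is a legitimate, slightly more self-contained rendering of the same proof.

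The one genuine gap is in part (3). The paper obtains $m_{\CR_2}(k) \leq \lceil 4k\ln k + k\ln 3\rceil$ for $\CR_2 = \CR_{\rm HS}\cup\CR_{\rm VS}\cup\CR_{\rm DS}$ by citing~\cite{ACCIKLSST11}; you instead sketch a Lov\'asz Local Lemma proof. Your LLL condition $e\,p\,(d+1)\leq 1$ with $p\leq 1/(3k^3)$ and $d\approx 3m^2 \approx 48k^2\ln^2 k$ requires roughly $16e\ln^2 k\leq k$, which fails for all small and moderate $k$ (up to several hundred), and "absorbing the remaining cases into the constants" would change the explicit bound claimed in the theorem, while the promised "ad hoc inductive argument" is not supplied. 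As written, your argument therefore does not establish the stated inequality for all $k$. If you are permitted to cite the known bound for strips in three directions (as the paper does), the proof is complete; otherwise you must either prove that base case in full or weaken part (3) to an $O(k\log k)$ statement.
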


\begin{proof}
    In all cases, the proof combines \cref{cor:shallow-hitting-sets} and \cref{lem:generic-lemma}. To obtain the desired bounds, we only need to choose suitable parameters.
    \begin{enumerate}
        \item By \cref{cor:shallow-hitting-sets}, for every point set $V$ and every $m \in \NN$, there exist subsets $S_{\rm{NW}}, S_{\rm{NE}} \subseteq V$ such that $S_{\rm{NW}} \cup S_{\rm{NE}}$ is a 2-shallow hitting set of $\HH(V, \CR_{\rm{NW}} \cup \CR_{\rm{NE}}, m)$ and every hyperedge of $\HH(V, \CR_{\rm{BL}}, m)$ is hit at most $1+1=2$ times by this set. 
        So we use $s = t = 2$. Further, we set $\CR_1 = \CR_{\rm{NW}} \cup \CR_{\rm{NE}}$, $\CR_2 = \CR_{\rm{BL}}$ and we use $f(k) = 3k-2$ for all $k$. By \cite{ACCCHHKLLMRU13}, we know that for every $k$, we have $m_{\CR_2}(k) \leq f(k)$. So by \cref{lem:generic-lemma}, for every $k$, we have:
        \[
            m(k) \leq (3k - 2) + k\max(2,2) = 5k - 2.
        \]
        
        \item By \cref{cor:shallow-hitting-sets}, for every point set $V$ and every $m \in \NN$, there exist subsets $S_{\rm{NW}}, S_{\rm{NE}},S_{\rm{SW}}, S_{\rm{SE}} \subseteq V$ such that $S_{\rm{NW}} \cup S_{\rm{NE}} \cup S_{\rm{SW}} \cup S_{\rm{SE}}$ is a 4-shallow hitting set of $\HH(V, \CR_{\rm{NW}} \cup \CR_{\rm{NE}} \cup \CR_{\rm{SW}} \cup \CR_{\rm{SE}}, m)$ and every hyperedge of $\HH(V, \CR_{\rm{HS}} \cup \CR_{\rm{VS}}, m)$ is hit at most $2+2+2+2=8$ times by this set. So we use $s = 8, t = 4$. Further, we set $\CR_1 = \CR_{\rm{NW}} \cup \CR_{\rm{NE}} \cup \CR_{\rm{SW}} \cup \CR_{\rm{SE}}$, $\CR_2 = \CR_{\rm{HS}} \cup \CR_{\rm{VS}}$ and we use $f(k) = 2k-1$ for all $k$. By \cite{ACCIKLSST11}, we know that for every $k$ we have $m_{\CR_2}(k) \leq f(k)$. So by \cref{lem:generic-lemma}, for every $k$, we have:
        \[
            m(k) \leq (2k - 1) + k\max(8, 4) = 10k - 1.
        \]
        
        \item By \cref{cor:shallow-hitting-sets}, for every point set $V$ and every $m \in \NN$, there exist subsets $S_{\rm{NW}}, S_{\rm{SE}} \subseteq V$ such that $S_{\rm{NW}} \cup S_{\rm{SE}}$ is a 3-shallow hitting set of $\HH(V, \CR_{\rm{NW}} \cup \CR_{\rm{SE}}, m)$ and every hyperedge of $\HH(V, \CR_{\rm{HS}} \cup \CR_{\rm{VS}} \cup \CR_{\rm{DS}}, m)$ is hit at most $2+2=4$ times by this set. So we use $s = 4, t = 3$. Further, we set $\CR_1 = \CR_{\rm{NW}} \cup \CR_{\rm{SE}}$, $\CR_2 = \CR_{\rm{HS}} \cup \CR_{\rm{VS}} \cup \CR_{\rm{DS}}$ and we use $f(k) = \lceil 4k \ln k + k \ln 3 \rceil$ for all $k$. By \cite{ACCIKLSST11}, we know that for every $k$ we have $m_{\CR_2}(k) \leq f(k)$. So by \cref{lem:generic-lemma}, for every $k$, we have:
        \[
            m(k) \leq \lceil 4k \ln k + k \ln 3 \rceil + k\max(4, 3) = \lceil 4k \ln k + k \ln 3 \rceil + 4k.
        \]
    \end{enumerate}
\end{proof}

\section*{Conclusions}

We have investigated unions of geometric hypergraphs, i.e., for range families $\CR$ that are the union of two range families $\CR_1$ and $\CR_2$, with respect to polychromatic $k$-colorings of the $m$-uniform geometric hypergraphs $\HH(V,\CR,m)$.
We observe the same behavior as for other range families in the literature: Either $m(k) < \infty$ holds for every $k$ or already $m(2) = \infty$.
It remains an interesting open problem to determine whether in general $m(2)<\infty$ always implies $m(k)<\infty$ for all $k$.

In the positive cases, our upper bounds on $m(k)$ are linear in $k$, except when $\CR$ contains strips of three different directions or bottomless and topless rectangles. 
It is worth noting that no range family $\CR$ is known for which $m(2) < \infty$ but $m(k) \in \omega(k)$.
A candidate could be $\CR = \CR_{\rm HS} \cup \CR_{\rm VS} \cup \CR_{\rm DS}$ or $\CR = \CR_{\rm BL} \cup \CR_{\rm TL}$.

Finally, we suggest a further investigation of shallow hitting sets in these geometric hypergraphs.
To the best of our knowledge, it might be true that their existence is equivalent to $m(k)$ being linear in $k$.
In particular, do bottomless rectangles (for which it is known that $m(k) \in O(k)$ \cite{ACCCHHKLLMRU13}) allow for shallow hitting sets?
And do octants in 3D (for which shallow hitting sets are known not to exist~\cite{CKMPUV20}) have $m(k) \in O(k)$?

\bibliographystyle{abbrvnat}
\bibliography{lit}

\end{document}